\documentclass[a4paper, reqno]{amsart}

\usepackage{graphicx}
\usepackage{tikz}
\usetikzlibrary{calc}

\newtheorem{theorem}{Theorem}
\newtheorem{lemma}[theorem]{Lemma}
\newtheorem{proposition}[theorem]{Proposition}
\newtheorem{corollary}[theorem]{Corollary}
\newtheorem{conjecture}[theorem]{Conjecture}

\theoremstyle{definition}
\newtheorem{definition}{Definition}

\newcommand{\btree}{\ensuremath{\beta(0,1)}-tree}
\newcommand{\btrees}{\btree s}
\newcommand{\F}{\mathcal{F}}
\newcommand{\floor}[1]{\lfloor #1\rfloor}

\newcommand{\nodestyle}{
  \tikzstyle{every node} = [font=\footnotesize];
}
\newcommand{\discstyle}{
  \tikzstyle{blk} =
    [ circle,fill=black,draw=black, minimum size=3.7pt, inner sep=0pt ];
  \tikzstyle{wht} =
    [ circle,fill=white,draw=black, minimum size=3.7pt, inner sep=0pt ];
  \tikzstyle{sml} =
    [ circle,fill=black,draw=black, minimum size=3pt, inner sep=0pt ];
}
\newcommand{\ostyle}{
  \tikzstyle{o} =
  [ circle,fill=gray,draw=black, minimum size=3pt, inner sep=0pt ];
}
\newcommand{\style}{
  \nodestyle
  \discstyle
}
\newcommand{\scl}{0.5}
\newcommand{\leaf}{\hspace{0.8pt}
  \begin{tikzpicture}[scale=\scl, semithick, baseline=-2.5pt]
    \style;
    \node[sml] (a) at (0,0) {};
  \end{tikzpicture}\hspace{0.8pt}
}
\newcommand{\trea}[4]{
  \begin{tikzpicture}[scale=\scl, semithick, baseline=(d)]
    \style;
    \node [sml] (a) at (0,0) {};
    \node [sml] (b) at (0,1) {};
    \node [sml] (c) at (0,2) {};
    \node [sml] (d) at (0,3) {};
    \draw 
      (a) node[right=2pt] {#1} --
      (b) node[right=2pt] {#2} --
      (c) node[right=2pt] {#3} --
      (d) node[right=2pt] {#4};
  \end{tikzpicture}
}
\newcommand{\treb}[4]{
  \begin{tikzpicture}[scale=\scl, semithick, baseline=(d)]
    \style;
    \node [sml] (a) at (-0.65,0) {};
    \node [sml] (b) at ( 0.65,0) {};
    \node [sml] (c) at (0,1) {};
    \node [sml] (d) at (0,2) {};
    \draw (a) node[below=2pt] {#1} -- (c);
    \draw (b) node[below=2pt] {#2} -- (c);
    \draw (c) node[right=2pt] {#3} -- (d) node[right=2pt] {#4};
  \end{tikzpicture}
}
\newcommand{\trec}[4]{
  \begin{tikzpicture}[scale=\scl, semithick, baseline=(d)]
    \style;
    \node [sml] (a) at (-0.65,0) {};
    \node [sml] (b) at (-0.65,1) {};
    \node [sml] (c) at ( 0.65,1) {};
    \node [sml] (d) at (   0,2) {};
    \draw (a) node[left=2pt] {#1} -- (b);
    \draw (b) node[left=2pt] {#2} -- (d);
    \draw (c) node[below=1pt] {#3} -- (d) node[right=2pt] {#4};
  \end{tikzpicture}
}
\newcommand{\tred}[4]{
  \begin{tikzpicture}[scale=\scl, semithick, baseline=(d)]
    \style;
    \node [sml] (a) at ( 0.65,0) {};
    \node [sml] (b) at ( 0.65,1) {};
    \node [sml] (c) at (-0.65,1) {};
    \node [sml] (d) at (   0,2) {};
    \draw (a) node[right=2pt] {#1} -- (b);
    \draw (b) node[right=2pt] {#2} -- (d);
    \draw (c) node[below=1pt] {#3} -- (d) node[right=2pt] {#4};
  \end{tikzpicture}
}
\newcommand{\tree}[4]{
  \begin{tikzpicture}[scale=\scl, semithick, baseline=(d)]
    \style;
    \node [sml] (a) at (-1,0) {};
    \node [sml] (b) at ( 0,0) {};
    \node [sml] (c) at ( 1,0) {};
    \node [sml] (d) at ( 0,1) {};
    \draw (a) node[below=1pt] {#1} -- (d);
    \draw (b) node[below=1pt] {#2} -- (d);
    \draw (c) node[below=1pt] {#3} -- (d) node[right=2pt, yshift=2pt] {#4};
  \end{tikzpicture}
}
\newcommand{\figtree}{
  \begin{tikzpicture}[xscale=0.37, yscale=0.5, semithick, baseline=(r)]
    \style;
    \node [sml] (r)       at ( 0,6) {};
    \node [sml] (r1)      at (-3,5) {};
    \node [sml] (r2)      at (-1,5) {};
    \node [sml] (r3)      at ( 1,5) {};
    \node [sml] (r31)     at ( 1,4) {};
    \node [sml] (r4)      at ( 3,5) {};
    \node [sml] (r41)     at ( 3,4) {};
    \node [sml] (r411)    at ( 3,3) {};
    \node [sml] (r4111)   at ( 2,2) {};
    \node [sml] (r41111)  at ( 2,1) {};
    \node [sml] (r411111) at ( 2,0) {};
    \node [sml] (r4112)   at ( 4,2) {};
    \draw (r) node[above=2pt] {4} -- (r1)  node[below left=-1pt] {0}
          (r)                     -- (r2)  node[below left=-1pt] {0}
          (r)                     -- (r3)  node[left=2pt, yshift=-2pt] {1}
          (r)                     -- (r4)  node[right=2pt] {2}
          (r3)                    -- (r31) node[left=2pt] {0}
          (r4)                    -- (r41) node[right=2pt] {1}
          (r41)                   -- (r411) node[right=2pt] {3}
          (r411)                  -- (r4111) node[left=2pt] {2}
          (r4111)                 -- (r41111) node[left=2pt] {1}
          (r41111)                -- (r411111) node[left=2pt] {0}
          (r411)                  -- (r4112) node[right=2pt] {0};
  \end{tikzpicture}
}

\newcommand{\treeS}{
\begin{tikzpicture}[yscale=0.5, xscale=0.35, semithick, baseline=(r1)]
  \style;
  \node [wht] (r)   at ( 0,3) {};
  \node [wht] (r1)  at (-1,2) {};
  \node [wht] (r11) at (-1,1) {};
  \node [wht] (r2)  at ( 1,2) {};
  \draw   (r)   node[above=1pt] {1}
       -- (r1)  node[left=1pt]  {0}
       -- (r11) node[left=1pt]  {0}
   (r) -- (r2)  node[below=1pt] {0};
\end{tikzpicture}
}
\newcommand{\treeT}{
\begin{tikzpicture}[yscale=0.5, xscale=0.35, semithick, baseline=(r11)]
  \style;
  \node [sml] (r)     at ( 0,4) {};
  \node [sml] (r1)    at ( 0,3) {};
  \node [sml] (r11)   at (-1,2) {};
  \node [sml] (r111)  at (-1,1) {};
  \node [sml] (r12)   at ( 1,2) {};
  \node [sml] (r121)  at ( 1,1) {};
  \node [sml] (r1211) at ( 1,0) {};
  \draw   (r)     node[right=1pt] {3}
       -- (r1)    node[right=1pt] {2}
       -- (r11)   node[left=1pt]  {1}
       -- (r111)  node[left=1pt]  {0}
  (r1) -- (r12)   node[right=1pt] {1}
       -- (r121)  node[right=1pt] {0}
       -- (r1211) node[right=1pt] {0};
\end{tikzpicture}
}

\def\triangle[#1,#2]#3;{
  \begin{scope}[shift={#3}]
    \begin{scope}[rotate=#2]
      \path (0,0)     coordinate (O);
      \path (245:4.4) coordinate (A);
      \path (-65:4.4) coordinate (B);
      \filldraw[gray!20] (O) -- (A) -- (B) -- cycle;
      \draw (O) -- (A);
      \draw (O) -- (B);
      \node at (0,-2.6) {#1};
      \node[o] at (O) {};
    \end{scope}
  \end{scope}
}

\renewcommand{\root}{\operatorname{root}}
\DeclareMathOperator{\exc}{\mathrm{exc}}

\DeclareMathOperator{\sub}{\mathrm{sub}}

\DeclareMathOperator{\rmod}{\mathrm{rmod}}

\DeclareMathOperator{\rzero}{\mathrm{rzero}}
\DeclareMathOperator{\open}{\mathrm{open}}

\newcommand{\one}{\operatorname{one}}
\newcommand{\rootM}{\operatorname{f\mathfrak{1}r\mathfrak{3}}}
\newcommand{\subM}{\operatorname{s\mathfrak{1}r\mathfrak{3}}}
\newcommand{\rzeroM}{\operatorname{b}}
\newcommand{\rmodM}{\operatorname{f\mathfrak{3}r\mathfrak{2}}}

\title{An involution on bicubic maps and $\beta(0,1)$-trees}

\author{Anders Claesson}
\address{
  Department of Computer and Information Sciences,
  University of Strathclyde,
  26 Richmond Street,
  Glasgow G1 1XH, United Kingdom,
  \texttt{anders.claesson@strath.ac.uk}
}
\author{Sergey Kitaev}
\address{
  Department of Computer and Information Sciences,
  University of Strathclyde,
  26 Richmond Street,
  Glasgow G1 1XH, United Kingdom,
  \texttt{sergey.kitaev@strath.ac.uk}
}
\author{Anna de Mier}
\address{
  Departament de Matem\`{a}tica Aplicada II,
  Universitat Polit\`{e}cnica de Catalunya,
  Jordi Girona 1--3,
  08034 Barcelona, Spain,
  \texttt{anna.de.mier@upc.edu}
}

\begin{document}

\begin{abstract}
  Bicubic maps are in bijection with \btrees.
  We introduce two new ways of decomposing \btrees. Using this we
  define an endofunction on \btrees, and thus also on bicubic maps. We
  show that this endofunction is in fact an involution. As a
  consequence we are able to prove some surprising results regarding
  the joint equidistribution of certain pairs of statistics on trees
  and maps. Finally, we conjecture the number of fixed points of the
  involution.
\end{abstract}

\keywords{planar map, bicubic map, description tree,
  \btree, statistics, equidistribution}

\maketitle
\thispagestyle{empty}

\section{Introduction}

A \emph{planar map} is a connected graph embedded in the sphere with
no edge-crossings, considered up to continuous deformations. A map has
\emph{vertices}, \emph{edges}, and \emph{faces} (disjoint simply
connected domains). The maps we consider shall be \emph{rooted},
meaning that a directed edge has been distinguished as the root. The
face that lies to the right of the root edge while following its
orientation is the \emph{root face}, whereas the vertex from which the
root stems is the \emph{root vertex}. When drawing a planar map on the
plane, we usually follow the convention to choose the outer
(unbounded) face as the root face. Tutte \cite[Chapter 10]{Tutte1998}
founded the enumerative theory of planar maps in a series of papers in
the 1960s (see~\cite{Tutte1963} and the references in~\cite{CS1997}).

A planar map in which each vertex is of degree 3 is {\em cubic}; it is
{\em bicubic} if, in addition, it is bipartite, that is, if its
vertices can be colored using two colors, say, black and white, so
that adjacent vertices are assigned different colors.

The smallest bicubic map has two vertices and
three edges joining them. It is well-known that the faces of a bicubic map
can be colored using three colors so that adjacent faces have distinct
colors, say, colors 1, 2 and 3, in a counterclockwise order around
white vertices. We will assume that the root vertex is black and the
root face has color 3. There are exactly three different bicubic maps
with 6 edges and they are given in Figure~\ref{bicubic}.

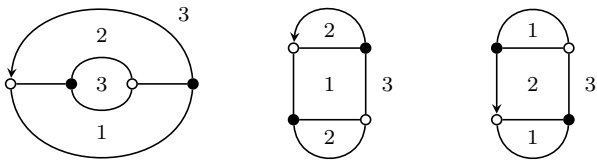
\begin{figure}[h]
  \begin{tikzpicture}[semithick, scale=0.80, baseline=(c.base), bend angle=85, >=stealth]
    \style;
    \node [wht] (a) at (0,0) {};
    \node [blk] (b) at (1,0) {};
    \node [wht] (c) at (2,0) {};
    \node [blk] (d) at (3,0) {};
    \node (n1) at (1.5,-0.8) {1};
    \node (n2) at (1.5, 0  ) {3};
    \node (n3) at (1.5, 0.8) {2};
    \node (n4) at (2.85, 1.15) {3};
    \path (a) edge[bend right, looseness=1.3] (d);
    \path (d) edge[bend right, looseness=1.3, shorten >=0.15pt, ->] (a);
    \path (b) edge[bend right, looseness=1.2] (c);
    \path (c) edge[bend right, looseness=1.2] (b);
    \path (a) edge (b) (c) edge (d);
  \end{tikzpicture}
  \qquad\quad
  \begin{tikzpicture}[semithick, scale=0.95, baseline=(n2), bend angle=85, >=stealth]
    \style;
    \node [blk] (a) at (0,0) {};
    \node [wht] (b) at (1,0) {};
    \node [wht] (c) at (0,1) {};
    \node [blk] (d) at (1,1) {};
    \node (n1) at (0.5,-0.25) {2};
    \node (n2) at (0.5, 0.50) {1};
    \node (n3) at (0.5, 1.25) {2};
    \node (n4) at (1.3, 0.50) {3};
    \path (a) edge (b) (b) edge (d) (d) edge (c) (c) edge (a);
    \path (a) edge[bend right, looseness=1.6] (b);
    \path (d) edge[bend right, looseness=1.6, shorten >=0.15pt, ->] (c);
  \end{tikzpicture}
  \qquad\quad
  \begin{tikzpicture}[semithick, scale=0.95, baseline=(n2), bend angle=85, >=stealth]
    \style;
    \node [wht] (a) at (0,0) {};
    \node [blk] (b) at (1,0) {};
    \node [blk] (c) at (0,1) {};
    \node [wht] (d) at (1,1) {};
    \node (n1) at (0.5,-0.25) {1};
    \node (n2) at (0.5, 0.50) {2};
    \node (n3) at (0.5, 1.25) {1};
    \node (n4) at (1.3, 0.50) {3};
    \path (a) edge (b) (b) edge (d) (d) edge (c) (c) edge[shorten >=0.15pt, ->] (a);
    \path (a) edge[bend right, looseness=1.6] (b);
    \path (d) edge[bend right, looseness=1.6] (c);
  \end{tikzpicture}
  \caption{All bicubic maps with 4 vertices.}\label{bicubic}
\end{figure}

The number of bicubic maps with $2n$ vertices was given by
Tutte \cite{Tutte1963}: $$\frac{3\cdot 2^{n-1}(2n)!}{n!(n+2)!}.$$

Let $M$ be a bicubic map. For $i=1,2,3$, let $\F_i(M)$ be the
set of $i$-colored faces of $M$. Let $R_1\in\F_1(M)$,
$R_2\in\F_2(M)$, and $R_3\in\F_3(M)$ be the three faces around the
root vertex; in particular, $R_3$ is the root
face.  We shall now define two statistics on bicubic maps:
\begin{align*}
\rootM(M) &\;\;\text{is the number of faces in $\F_1(M)$ that touch $R_3$;}\\
\rmodM(M) &\;\;\text{is the number of faces in $\F_3(M)$ that touch $R_2$.}
\end{align*}

Consider the following transformation $\phi$ on bicubic maps. Recolor
the faces by the mapping $\{1\mapsto 2,\, 2\mapsto 3,\, 3\mapsto 1\}$. Keep the
colors of the vertices. Keep, also, the root vertex, but let the new root edge be
the first edge in counterclockwise
direction from the old root edge:\vspace{-2ex}
$$
\begin{tikzpicture}[scale=1.4, semithick, bend angle=85, >=stealth]
  \style;
  \node [wht] (a) at (-0.9,0) {};
  \node [blk] (b) at (0,0)  {};
  \node [wht] (c) at (0:1)  {};
  \node [wht] (d) at (50:1) {};
  \path (a) edge (b);
  \path (b) edge[shorten >=0.15pt, ->]
    node[midway, yshift=-8pt] {old root} (c);
  \path (b) edge[shorten >=0.15pt, ->]
    node[midway, yshift=6pt, xshift=-7pt, rotate=50] {new root} (d);
\end{tikzpicture}
$$
It is easy to see that $\phi$ is a bijection; indeed, $\phi^3$ is the
identity transformation. Moreover, $\phi$ establishes the following
result.

\begin{proposition}\label{prop}
  For any positive integer $n$, we have
  $$\sum_{M}x^{\rootM(M)} = \sum_{M}x^{\rmodM(M)},
  $$
  where both sums are over all bicubic maps on $n$ vertices. In
other words, the statistics $\rootM$ and $\rmodM$ are equidistributed.
\end{proposition}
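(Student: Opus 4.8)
The plan is to establish the stronger fact that $\rootM(\phi(M))=\rmodM(M)$ for every bicubic map $M$. Granting this, the two polynomials in the statement coincide: reindexing $\sum_M x^{\rmodM(M)}$ along the bijection $M\mapsto\phi(M)$ (which maps the set of bicubic maps on $n$ vertices to itself) turns it into $\sum_M x^{\rootM(M)}$. So there are really two things to do: confirm that $\phi$ is a well-defined, vertex-preserving bijection, and then track the distinguished faces $R_2$ and $R_3$ through $\phi$.

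For the first point, $\phi$ leaves the embedded graph untouched, so $\phi(M)$ is bicubic; it keeps the vertex colors, so the root vertex of $\phi(M)$ is still black; and the recoloring $\{1\mapsto2,\,2\mapsto3,\,3\mapsto1\}$ permutes the colors cyclically, hence sends a proper $3$-coloring of the faces in which $1,2,3$ occur counterclockwise around white vertices to another coloring of the same kind. That the new root face again has color $3$ will drop out of the second point. Bijectivity follows from $\phi^3=\mathrm{id}$: applying the recoloring three times restores the original colors, and moving the root edge three steps counterclockwise around a degree-$3$ vertex restores it. The number of vertices is clearly preserved.

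For the second point, write $e_1,e_2,e_3$ for the edges at the root vertex in counterclockwise order, with $e_1$ the old root edge, and write $W_{12},W_{23},W_{31}$ for the three corners at the root vertex, $W_{ij}$ lying between $e_i$ and $e_j$. The old root face, being the corner to the right of $e_1$ when $e_1$ is directed away from the root vertex, is $W_{31}$, which has color $3$. Since the root vertex is black, the colors $1,2,3$ run clockwise around it, i.e.\ $1,3,2$ counterclockwise; matching this cyclic order against the counterclockwise corner order $W_{12},W_{23},W_{31}$ with $W_{31}$ of color $3$ forces $W_{12}$ to have color $2$ (and $W_{23}$ color $1$). Hence $W_{12}$ is the face $R_2$ of $M$. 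The new root edge is $e_2$, and the corner to its right is $W_{12}$; after recoloring, $W_{12}$ has color $3$, which both confirms the convention for $\phi(M)$ and shows that the root face of $\phi(M)$ is the face that was $R_2$ in $M$.

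Combining these: by definition $\rootM(\phi(M))$ counts the faces of $\phi(M)$ that are colored $1$ and touch the root face of $\phi(M)$. A face is colored $1$ in $\phi(M)$ exactly when it was colored $3$ in $M$, so these are the faces in $\F_3(M)$; and by the previous paragraph the root face of $\phi(M)$ is $R_2$. Since incidence of faces is a property of the underlying map and not of the coloring, $\rootM(\phi(M))$ equals the number of faces in $\F_3(M)$ touching $R_2$, which is precisely $\rmodM(M)$. The step needing genuine care is the corner-chasing of the third paragraph — meshing the (counterclockwise-around-white, hence clockwise-around-black) coloring convention with the orientation of the new root edge to pin down the new root face; the rest is bookkeeping.
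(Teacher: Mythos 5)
Your proof is correct and follows essentially the same route as the paper: the paper's argument is exactly the observation that the rerooting-and-recoloring map $\phi$ is a bijection (with $\phi^3=\mathrm{id}$) satisfying $\rootM(\phi(M))=\rmodM(M)$, which the paper leaves as "easy to see." Your corner-chasing verification that the new root face is the old $R_2$ and that $1$-colored faces of $\phi(M)$ are the $3$-colored faces of $M$ just supplies the details the paper omits.
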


In this paper we show the following stronger result.

\begin{theorem}\label{thm}
  For any positive integer $n$, we have
  $$\sum_{M}x^{\rootM(M)}y^{\rmodM(M)} = \sum_{M}x^{\rmodM(M)}y^{\rootM(M)},
  $$
  where both sums are over all bicubic maps on $n$ vertices. In
  other words, the two pairs of statistics $(\rootM,
  \rmodM)$ and $(\rmodM, \rootM)$ are jointly equidistributed.
\end{theorem}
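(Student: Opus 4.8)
The plan is to establish the stronger statement, Theorem~\ref{thm}, by working on the combinatorial side of $\btrees$. Since bicubic maps are in bijection with $\btrees$, with $2n$ vertices corresponding to $n$ edges, the first step is to read off what the map statistics $\rootM$ and $\rmodM$ become under this bijection. I expect each of them to turn into a transparent label- or subtree-count on the tree; denote the two resulting tree statistics by $s$ and $t$. Theorem~\ref{thm} then amounts to the symmetry of the joint distribution of $(s,t)$, namely $\sum_T x^{s(T)}y^{t(T)} = \sum_T x^{t(T)}y^{s(T)}$, the sums ranging over all $\btrees$ with a fixed number of edges. (When $n$ is odd there are no bicubic maps and there is nothing to prove.) It is thus enough to produce a size-preserving involution $\psi$ on $\btrees$ with $s\circ\psi = t$ and $t\circ\psi = s$.

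The core of the argument is to introduce \emph{two} recursive ways of taking a $\btree$ apart. The first decomposition is adapted to $s$: it detaches a canonical portion of the tree near the root, recording $s(T)$ as a single integer together with an ordered list of strictly smaller $\btrees$. The second decomposition is the analogue adapted to $t$, producing $t(T)$ as its integer datum and again a list of smaller $\btrees$. The essential structural point to prove is that the two procedures output data of exactly the same type --- an integer subject to the same admissibility bound relative to an ordered tuple of $\btrees$ --- so that the output of either decomposition is a legal input to the reassembly map of the other, and both reassembly maps are bijections from this common set of admissible tuples onto $\btrees$ of the correct size.

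Given this, define $\psi$ recursively: if the first decomposition of $T$ is $(m; T_1,\dots,T_r)$, let $\psi(T)$ be the tree obtained by feeding $(m; \psi(T_1),\dots,\psi(T_r))$ into the \emph{second} reassembly map, with $\psi$ fixing the one-vertex tree. Three things then need to be checked, by induction on size: that $(m;\psi(T_1),\dots,\psi(T_r))$ is admissible, so that $\psi$ is well defined; that $\psi$ swaps $s$ and $t$, which is immediate from how the two decompositions record their integer data; and that $\psi$ is an involution. The last point reduces to a \emph{matching lemma}: if the second decomposition of $T$ is $(b; U_1,\dots,U_r)$, then the first decomposition of $\psi(T)$ is $(b; \psi(U_1),\dots,\psi(U_r))$. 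Granting the lemma, $\psi(\psi(T))$ is the second reassembly applied to $(b;\psi(\psi(U_1)),\dots,\psi(\psi(U_r)))=(b; U_1,\dots,U_r)$, using the inductive hypothesis on the pieces, and this is exactly $T$.

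I expect the main obstacle to be precisely the interplay between the two decompositions: both the verification that they share a common admissible-tuple format and the proof of the matching lemma. These are combinatorial bookkeeping arguments in which the label constraints defining $\btrees$ must be tracked carefully through the recursion, but they should require no idea beyond a well-chosen induction once the two decompositions have been set up correctly. Finally, transporting $\psi$ back through the bicubic-map bijection gives an involution on bicubic maps on $n$ vertices that interchanges $\rootM$ and $\rmodM$, which proves Theorem~\ref{thm}; specializing $y=1$ recovers Proposition~\ref{prop}.
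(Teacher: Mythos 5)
Your outline follows the same skeleton as the paper's combinatorial proof: translate $\rootM$ and $\rmodM$ into tree statistics via the bijection with \btrees\ (this is Proposition~\ref{prop:exc}, together with $\rmod=\open$ from Lemma~\ref{lemma:rmod-open}), introduce two dual recursive decompositions, define a map that parses a tree with one decomposition and reassembles the (recursively transformed) pieces with the other, and deduce involutivity from exactly the ``matching lemma'' you describe --- in the paper these are the $(\rho,\mu_i)$ and $(\sigma,\nu_i)$ decompositions, Definition~\ref{def:g}, Lemma~\ref{lemma:dual}, and Theorems~\ref{thm:g-is-an-involution} and~\ref{thm:eqd}, followed by Corollary~\ref{corollary11}.

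The genuine gap is that the two decompositions, which carry essentially all of the content, are only postulated, and the format you posit for them is not one that can simply be assumed. You ask for a one-shot decomposition recording $s(T)$ itself as a single integer attached to a list of smaller trees, with ``the same admissibility bound relative to the tuple''; in the actual constructions the picture is different and the difference is where the work lies. A tree with $\root(U)>1$ decomposes as $U=\mu_i(S,T)$ with $\root(S)=1$ and $1\le i\le\open(T)$ --- the bound involves the \emph{other} statistic, evaluated on a subtree --- and $\root$ is accumulated one unit per nested $\mu$-step rather than read off in one stroke; dually $\nu_i(S,T)$ requires $\open(S)=1$ and $1\le i\le\root(T)$. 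The operations themselves are delicate ($\mu_i$ grafts $S$ at the $i$th open node of the rightmost path and increments the labels above it; $\nu_i$ identifies the rightmost leaf of $S$ with the root of $T$ and adds $i-1$ along the rightmost path), well-definedness of the recursive map needs a separate check (that $\open(g(S))=1$ in the relevant case), and the matching lemma is not routine bookkeeping: it rests on the commutation identities of Lemmas~\ref{lemma:rho-nu}, \ref{lemma:nu-mu} and \ref{lemma:m1} and a three-case structural induction. So your assessment that everything after ``setting up the decompositions correctly'' is a well-chosen induction understates the problem: setting them up, verifying that the two parse formats coincide, and proving the matching lemma \emph{are} the proof. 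Worth noting as a fallback: once the two decompositions exist, the paper also gets Theorem~\ref{thm} by a shorter generating-function argument (the same functional equation is satisfied by $F$ marking $(\root,\rmod)$ and by $G$ marking $(\rmod,\root)$), without constructing the involution at all.
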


To prove Theorem~\ref{thm} we first translate the statement to a
corresponding statement on so called \btrees; there is a one-to-one
correspondence~\cite{CJS} between bicubic maps and such trees. We then
provide two proofs of the theorem.  Our first proof of Theorem
\ref{thm} is based on generating functions (see the end
of Section \ref{sec:new}).  Our combinatorial proof of the theorem
(see Corollary \ref{corollary11} and the text following it) is based
on defining an endofunction on the trees, and proving that it is an
involution that respects the statistics in question (see Theorem
\ref{thm:g-is-an-involution}). We also conjecture the number of fixed
points of the involution. 

The results in this paper can be seen as an extension to $\beta(0,1)$-trees and bicubic maps of studies
conducted in \cite{CKS,CKS0,KM,KMN} on $\beta(1,0)$-trees and rooted
non-separable planar maps.

\section{\btrees}\label{sec:btrees}

Cori et al.~\cite{CJS} introduced description trees to give a
framework for recursively decomposing several families of planar
maps. A \btree\ is a particular kind of description tree; it is
defined as a rooted plane tree whose nodes are labeled with
nonnegative integers such that
\begin{enumerate}
\item leaves have label $0$;
\item the label of the root is one more than the sum of its children's
  labels;
\item the label of any other node exceeds the sum of its children's
  labels by at most one.
\end{enumerate}
The unique \btree\ with exactly one node (and no edges) will be called
{\em trivial}; the root of the trivial tree has label  $0$. Any other \btree\ will be called {\em nontrivial}. In
Figure~\ref{beta01} we have listed all \btrees\ on 4 nodes.
\begin{figure}[h]
  $$
  \trea{0}{0}{0}{1}\qquad
  \trea{0}{0}{1}{2}\qquad
  \trea{0}{1}{0}{1}\qquad
  \trea{0}{1}{1}{2}\qquad
  \trea{0}{1}{2}{3}\qquad
  \treb{0}{0}{0}{1}\qquad
  \treb{0}{0}{1}{2}
  $$
  $$
  \trec{0}{0}{0}{1}\qquad
  \trec{0}{1}{0}{2}\qquad
  \tred{0}{0}{0}{1}\qquad
  \tred{0}{1}{0}{2}\qquad
  \tree{0}{0}{0}{1}
  $$
  \caption{All \btrees\ on 4 nodes.}\label{beta01}
\end{figure}
Let $\root(T)$ denote the root label of $T$, and let $\sub(T)$ denote
the number of children of the root.  We say that a \btree\ $T$ is {\em
  reducible} if $\sub(T)>1$, and {\em irreducible} otherwise. Any
reducible tree can be written as a sum of irreducible ones, where the
sum $U\oplus V$ of two trees $U$ and $V$ is defined as the tree
obtained by identifying the roots of $U$ and $V$ into a new root with
label $\root(U)+\root(V)-1$.  See Figure~\ref{fig:tree} for an
example.
\begin{figure}[h]
  \figtree
  \!\!\!\!=\;\;\;\;
  \begin{tikzpicture}[xscale=0.37, yscale=0.5, semithick, baseline=(r)]
    \style;
    \node [sml] (r)       at (0,1) {};
    \node [sml] (r1)      at (0,0) {};
    \draw (r) node[above=2pt] {1} -- (r1) node[below=2pt] {0};
  \end{tikzpicture}
  \,$\oplus$\,
  \begin{tikzpicture}[xscale=0.37, yscale=0.5, semithick, baseline=(r)]
    \style;
    \node [sml] (r)       at (0,1) {};
    \node [sml] (r1)      at (0,0) {};
    \draw (r) node[above=2pt] {1} -- (r1) node[below=2pt] {0};
  \end{tikzpicture}
  \,$\oplus$\,
  \begin{tikzpicture}[xscale=0.37, yscale=0.5, semithick, baseline=(r)]
    \style;
    \node [sml] (r)       at (0,2) {};
    \node [sml] (r3)      at (0,1) {};
    \node [sml] (r31)     at (0,0) {};
    \draw (r) node[above=2pt] {2} -- (r3) node[right=1pt] {1} -- (r31) node[below=2pt] {0};
  \end{tikzpicture}
  $\!\oplus\!$\hspace{-2.5ex}
  \begin{tikzpicture}[xscale=0.37, yscale=0.5, semithick, baseline=(r)]
    \style;
    \node [sml] (r)       at (3,6) {};
    \node [sml] (r4)      at (3,5) {};
    \node [sml] (r41)     at (3,4) {};
    \node [sml] (r411)    at (3,3) {};
    \node [sml] (r4111)   at (2,2) {};
    \node [sml] (r41111)  at (2,1) {};
    \node [sml] (r411111) at (2,0) {};
    \node [sml] (r4112)   at (4,2) {};
    \draw (r) node[above=2pt] {3}
          (r)                     -- (r4)  node[right=2pt] {2}
          (r4)                    -- (r41) node[right=2pt] {1}
          (r41)                   -- (r411) node[right=2pt] {3}
          (r411)                  -- (r4111) node[left=2pt] {2}
          (r4111)                 -- (r41111) node[left=2pt] {1}
          (r41111)                -- (r411111) node[left=2pt] {0}
          (r411)                  -- (r4112) node[right=2pt] {0};
  \end{tikzpicture}
  \caption{Decomposing a reducible \btree.}\label{fig:tree}
\end{figure}

Note also that any irreducible tree with at least one edge
is of the form $\lambda_i(T)$, where $0 \leq i\leq \root(T)$ and
$\lambda_i(T)$ is obtained from $T$ by joining a new root via an
edge to the old root; the old root is given the label $i$, and the new
root is given the label $i+1$. For instance,
$$
\text{if }\;
T =
\begin{tikzpicture}[xscale=0.37, yscale=0.5, semithick, baseline=(r)]
  \style;
  \node [sml] (r)  at ( 0,2) {};
  \node [sml] (r1) at (-1,1) {};
  \node [sml] (r2) at ( 1,1) {};
  \node [sml] (r21) at ( 1,0) {};
  \draw (r) node[above=2pt] {2} -- (r1) node[below=2pt] {0}
        (r)                     -- (r2) node[right=1pt] {1} -- (r21) node[below=2pt] {0};
\end{tikzpicture}
\text{then}\quad
\newcommand{\lambdaT}[2]{
\begin{tikzpicture}[xscale=0.37, yscale=0.5, semithick, baseline=(r')]
  \style;
  \node [sml] (r') at ( 0,3) {};
  \node [sml] (r)  at ( 0,2) {};
  \node [sml] (r1) at (-1,1) {};
  \node [sml] (r2) at ( 1,1) {};
  \node [sml] (r21) at ( 1,0) {};
  \draw (r') node[above=2pt] {#2} -- (r);
  \draw (r)  node[right=2pt] {#1} -- (r1) node[below=2pt] {0}
        (r)                       -- (r2) node[right=1pt] {1} -- (r21) node[below=2pt] {0};
\end{tikzpicture}
}
\lambda_0(T) = \!\!\!\!\lambdaT{0}{1}\hspace{-3.5ex},\quad
\lambda_1(T) = \!\!\!\!\lambdaT{1}{2}\hspace{-3.5ex},\quad \text{and }\;
\lambda_2(T) = \!\!\!\!\lambdaT{2}{3}\hspace{-3.5ex}.\quad
$$

Let us now introduce a few more statistics on \btrees. By the {\em
  rightmost path} we shall mean the path from the root to the
rightmost leaf. We define $\rzero(T)$ as the number of zeros on the
rightmost path. By definition, $\rzero(\leaf)=0$.

A node is called {\em excessive} if its label exceeds the sum of its
children's labels; it is called {\em moderate} otherwise. In
particular, a leaf is a moderate node and the root is an excessive
node.  Assuming that $T$ is nontrivial, we let $\rmod(T)$ be the
number of moderate nodes on the rightmost path of $T$. For the case of
the trivial tree we define $\rmod(\leaf)=0$.

A node on the rightmost path, possibly the root, will be called {\em
  open} if its rightmost child (the child on the rightmost path), if
any, is a non-leaf moderate node. In particular, the rightmost
leaf is always an open node. Let $\open(T)$ denote the number of
open nodes in $T$; we define $\open(\leaf)=0$.

For the tree $T$ in Figure~\ref{fig:tree} we see that $\root(T)=4$,
$\sub(T)=4$, $\rzero(T)=1$ and $\rmod(T)=\open(T)=2$. That $\rmod(T)$ and
$\open(T)$ agree is not a coincidence as demonstrated in the proof of
the following lemma.

\begin{lemma}\label{lemma:rmod-open}
  For any \btree\ $T$ we have $\rmod(T)=\open(T)$.
\end{lemma}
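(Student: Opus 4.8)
The plan is to prove the identity by matching the moderate nodes on the rightmost path with the open nodes on the rightmost path via the ``shift along the path''. First I would dispose of the trivial tree, where $\rmod(\leaf)=\open(\leaf)=0$ by definition. So assume $T$ is nontrivial and write its rightmost path as $v_0,v_1,\dots,v_k$, where $v_0=\root(T)$, $v_k$ is the rightmost leaf, $k\ge 1$, and $v_{i+1}$ is the rightmost child of $v_i$.

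The key observation is that for $0\le i<k$ the node $v_i$ has $v_{i+1}$ among its children and is therefore not a leaf. Consequently the moderate nodes on the rightmost path are: never $v_0$ (the root is excessive); always $v_k$ (a leaf is moderate); and, among $v_1,\dots,v_{k-1}$, exactly those nodes whose label does not exceed the sum of their children's labels — each of which is automatically a \emph{non-leaf} moderate node. Hence $\rmod(T)=1+m$, where $m$ denotes the number of non-leaf moderate nodes among $v_1,\dots,v_{k-1}$.

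On the other side, $v_k$ is open by definition, while for $0\le i<k$ the node $v_i$ is open if and only if its rightmost child $v_{i+1}$ is a non-leaf moderate node. Since $v_{i+1}$ fails to be a leaf precisely when $i+1\le k-1$, this means $v_i$ (with $i<k$) is open iff $1\le i+1\le k-1$ and $v_{i+1}$ is moderate. Re-indexing by $j=i+1$ sets up a bijection between the open nodes among $v_0,\dots,v_{k-1}$ and the non-leaf moderate nodes among $v_1,\dots,v_{k-1}$, so $\open(T)=1+m$ as well, and comparing the two expressions yields $\rmod(T)=\open(T)$.

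Since every step is a direct unravelling of the definitions, I do not anticipate a real obstacle; the only point that needs care is the bookkeeping at the two ends of the path — verifying that the root is excluded on the moderate side, that the rightmost leaf accounts for the common ``$+1$'', and that no $v_i$ with $i<k$ is a leaf, so that ``moderate'' and ``non-leaf moderate'' coincide along the interior of the path.
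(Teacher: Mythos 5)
Your proof is correct and is essentially the paper's own argument: both match the open nodes strictly above the rightmost leaf with the non-leaf moderate nodes on the rightmost path via the rightmost-child map (noting the root is excessive), and both account for the rightmost leaf being simultaneously open and moderate. Your version just spells out the index bookkeeping $v_0,\dots,v_k$ more explicitly.
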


\begin{proof}
  Since the right child of a non-leaf open node is non-leaf and
  moderate, and the root is not a moderate node, it follows that among
  non-leaves the numbers of open and moderate nodes agree. As the
  rightmost leaf is both open and moderate, the equality of both
  statistics follows.
\end{proof}

\section{Bicubic maps as \btrees}\label{sec:maps-trees}

Following \cite{CS1997} we will now describe a bijection between 
bicubic maps and \btrees. Let us first recall some definitions from the
introduction. For any bicubic map $M$ and $i=1,2,3$, let
$\F_i(M)$ be the set of $i$-colored faces of $M$. Let
$R_1\in\F_1(M)$, $R_2\in\F_2(M)$, and $R_3\in\F_3(M)$ be the three
faces around the root vertex; in particular, $R_3$
is the root face. In addition, let $S_1\in\F_1(M)$ be the $1$-colored
face that meets the vertex that the root edge points at:
$$
\begin{tikzpicture}[semithick, scale=0.7, baseline=-0.6ex]
  \style;
  \node [blk] (r) at (0,0) {};
  \node [wht] (s) at (1,0) {};
  \path (-1,0) edge (r) edge[->] (s) edge (2,0);
  \path (r) edge (0,1);
  \path (s) edge (1,1);
  \node [wht] (s) at (1,0) {};
  \node at (-0.5, 0.5) {$R_1$};
  \node at ( 0.5, 0.5) {$R_2$};
  \node at ( 1.5, 0.5) {$S_1$};
  \node at ( 0.5,-0.5) {$R_3$};
\end{tikzpicture}
$$ 
Let us say that a face touches another face $k$ times if there are $k$
different edges each belonging to the boundaries of both faces.
Define the following two statistics:
\begin{align*}
  \rzeroM(M)&\;\;\text{is the number of black vertices incident to both $R_1$ and $R_2$;}\\
  \subM(M)  &\;\;\text{is the number of times $S_1$ touches $R_3$.}
\end{align*}
We say that $M$ is {\em irreducible} if $\subM(M)=1$, or, in other
words, if $S_1$ touches $R_3$ exactly once; we say that $M$ is {\em
  reducible} otherwise. We shall introduce operations on bicubic
maps that correspond to $\lambda_i$ and $\oplus$ of \btrees.  This
will induce the desired bijection $\psi$ between bicubic maps and
$\beta(0,1)$-trees.

To construct an irreducible bicubic map based on $M$, and having two
more vertices than $M$, we proceed in one of two ways. The first way (1)
corresponds to  $\lambda_i(T)$ when $i=\root(T)$; the second way (2)
corresponds to $\lambda_i(T)$ when $0\leq i < \root(T)$.

\def\mapM{ \filldraw[fill=blue!6,draw=black!50] (0,0) circle (1);
  \node[font=\small] at (0,0) {$M$}; }

\begin{itemize}
\item[(1)] We create a new $1$-colored face touching the root face
  exactly once, so $\rootM(M')=\rootM(M)+1$, by removing the root edge
  from $M$ and adding a digon that we connect to the map as in the
  picture below.
  $$
  \begin{tikzpicture}[semithick, scale=0.65, baseline=-0.6ex]
    \style;
    \mapM;
    \draw [->] (240:1) arc (240:291:1);
    \node [blk] (r) at (240:1) {};
    \node [wht] (s) at (300:1) {};
    \node at (-1.4,0) {\tiny{3}};
  \end{tikzpicture}
  \quad \longmapsto \quad M' \,=\; 
  \begin{tikzpicture}[semithick, scale=0.65, baseline=-0.6ex]
    \style;
    \mapM;
    \draw[white, thick] (240:1) arc (240:300:1);
    \draw (240:1) -- +(0,-1) coordinate (a);
    \draw[->] (240:1) -- +(0,-0.84);
    \draw (300:1) -- +(0,-1) coordinate (b);
    \path (a) edge[bend right, looseness=1.5] (b);
    \path (a) edge[bend left, looseness=1.5] (b);
    \node [wht] at (a) {};
    \node [blk] at (b) {};
    \node [blk] (r) at (240:1) {};
    \node [wht] (s) at (300:1) {};
    \node at ($ (a) !.5! (b) $) {\tiny{1}};
    \node[yshift=0.5pt] at ($ (r) !.5! (b) $) {\tiny{2}};
    \node at (1.4,0) {\tiny{3}};
  \end{tikzpicture}
  $$
\item[(2)] Assuming that $\rootM(M)=k$; that is, $M$ has $k$ $1$-colored
  faces touching the root face, we can create an irreducible map $M'$
  such that $\rootM(M')=i$, where $1\leq i\leq k$.  To this end, we
  remove the root edge from $M$. Starting at the root node and
  counting in {\em clockwise direction}, we also remove the first edge
  of the $i$-th $1$-colored face that touches the root face. In the
  picture below we schematically illustrate the case $i=3$. Next we add
  two more vertices and respective edges, and assign a new root as shown in the
  figure.
  \def\radius{1.5}
  \def\radiux{1.3}
  $$ 
  \begin{tikzpicture}[semithick, scale=0.65, bend angle=45, rotate=0,  baseline=-0.6ex] 
    \style;
    \path
    coordinate (P0) at (0*45:\radius) {}
    coordinate (P1) at (1*45:\radius) {}
    coordinate (P2) at (2*45:\radius) {}
    coordinate (P3) at (3*45:\radius) {}
    coordinate (P4) at (4*45:\radius) {}
    coordinate (P5) at (5*45:\radius) {}
    coordinate (P6) at (6*45:\radius) {}
    coordinate (P7) at (7*45:\radius) {};
    \filldraw[fill=blue!6,draw=black!50] (0,0) circle (\radius);
    \node[font=\small] at (0,0) {$M$};
    \path (P1) edge[bend left, looseness=1.5] (P2);
    \path (P3) edge[bend left, looseness=1.5] (P4);
    \path (P5) edge[bend left, looseness=1.5] (P6);
    \path (P7) edge[bend left, looseness=1.5] (P0);
    \draw [->] (P6) arc (6*45:7*45-5:\radius);
    \node at (-2,0) {\tiny{3}};
    \node at (1.5*45:\radiux) {\tiny{1}};
    \node at (3.5*45:\radiux) {\tiny{1}};
    \node at (5.5*45:\radiux) {\tiny{1}};
    \node at (7.5*45:\radiux) {\tiny{1}};
    \node [blk] at (P0) {};
    \node [wht] at (P1) {};
    \node [blk] at (P2) {};
    \node [wht] at (P3) {};
    \node [blk] at (P4) {};
    \node [wht] at (P5) {};
    \node [blk] at (P6) {};
    \node [wht] at (P7) {};
  \end{tikzpicture}
  \quad \longmapsto \quad M' \,=\!\!
  \begin{tikzpicture}[semithick, scale=0.65, bend angle=45, rotate=0,  baseline=-0.6ex] 
    \style;
    \path
    coordinate (A)  at (0*45:2*\radius) {}
    coordinate (B)  at (0*45:2.5*\radius) {}
    coordinate (Q)  at (1.7*45:\radius) {}
    coordinate (P0) at (0*45:\radius) {}
    coordinate (P1) at (1*45:\radius) {}
    coordinate (P2) at (2*45:\radius) {}
    coordinate (P3) at (3*45:\radius) {}
    coordinate (P4) at (4*45:\radius) {}
    coordinate (P5) at (5*45:\radius) {}
    coordinate (P6) at (6*45:\radius) {}
    coordinate (P7) at (7*45:\radius) {};
    \filldraw[fill=blue!6,draw=black!50] (0,0) circle (\radius);
    \node[font=\small] at (0,0) {$M$};
    \path (P6) edge[->, bend right, shorten >=2pt] (B);
    \path (P7) edge[bend right] (A);
    \path (B) edge[bend angle=90, bend right, looseness=1] (P2);
    \path (A) edge[bend angle=90, bend right, looseness=1] (Q);
    \path (A) edge (B);
    \path (P1) edge[bend left, looseness=1.5] (P2);
    \path (P3) edge[bend left, looseness=1.5] (P4);
    \path (P5) edge[bend left, looseness=1.5] (P6);
    \path (P7) edge[bend left, looseness=1.5] (P0);
    \draw[very thick, white] (Q) arc (1.7*45:2*45:\radius);
    \draw[very thick, white] (P6) arc (6*45:7*45:\radius);
    \node at (4.5*45:1.9) {\tiny{3}};
    \node at (0.5*45:1.9) {\tiny{3}};
    \node at (7.0*44.3:1.9) {\tiny{2}};
    \node at (0.5*45:3.17) {\tiny{1}};
    \node at (3.5*45:\radiux) {\tiny{1}};
    \node at (5.5*45:\radiux) {\tiny{1}};
    \node at (7.5*45:\radiux) {\tiny{1}};
    \node [blk] at (A)  {};
    \node [wht] at (B)  {};
    \node [blk] at (P0) {};
    \node [wht] at (P1) {};
    \node [blk] at (P2) {};
    \node [wht] at (P3) {};
    \node [blk] at (P4) {};
    \node [wht] at (P5) {};
    \node [blk] at (P6) {};
    \node [wht] at (P7) {};
    \node [circle,fill=white,draw=black, minimum size=2.5pt, inner sep=0pt] at (Q) {};
  \end{tikzpicture}
  $$
\end{itemize}
Any irreducible bicubic map on $n+2$ vertices can be constructed from some
bicubic map on $n$ vertices by applying operation (1) or (2) above.

We shall now describe how to create a reducible map based on
irreducible maps $M_1$, $M_2$, \ldots, $M_k$. An illustration for
$k=3$ can be found below. This corresponds to the $\oplus$-operation on \btrees.
\begin{itemize}
\item[(3)] We begin by lining up the maps $M_1$, $M_2$, \ldots,
  $M_k$. Next, in each map $M_i$, we remove the first edge (in {\em
    counter-clockwise direction}) from the root edge on the root
  face. Then we connect the maps as shown in the figure, and define
  the root edge of the obtained map to be the root edge of $M_k$.
\end{itemize}

\def\mapMi{\filldraw[fill=blue!6,draw=black!50] (0,0) circle (1);}
$$
\begin{tikzpicture}[semithick, scale=0.61, bend angle=45, rotate=0,  baseline=-0.6ex] %
  \style;
  \mapMi
  \draw [->] (180:1) arc (180:231:1);
  \draw      (240:1) arc (240:300:1);
  \node [blk] (r1) at (180:1) {};
  \node [wht] (s1) at (240:1) {};
  \node [blk] (t1) at (300:1) {};
  \path (s1) edge[bend left, looseness=1.5] (t1);
  \node [yshift=1.3] at ($ (s1) !.5! (t1) $) {\tiny{1}};
  \node[font=\small] at (0,0) {$M_1$};
  \node at (-1.2,0.7) {\tiny{3}};
  \begin{scope}[xshift=80pt]
    \mapMi
    \draw [->] (180:1) arc (180:231:1);
    \draw      (240:1) arc (240:300:1);
    \node [blk] (r2) at (180:1) {};
    \node [wht] (s2) at (240:1) {};
    \node [blk] (t2) at (300:1) {};
    \path (s2) edge[bend left, looseness=1.5] (t2);
    \node [yshift=1.3] at ($ (s2) !.5! (t2) $) {\tiny{1}};
    \node[font=\small] at (0,0) {$M_2$};
    \node at (-1.2,0.7) {\tiny{3}};
  \end{scope}
  \begin{scope}[xshift=160pt]
    \mapMi
    \draw [->] (180:1) arc (180:231:1);
    \draw      (240:1) arc (240:300:1);
    \node [blk] (r3) at (180:1) {};
    \node [wht] (s3) at (240:1) {};
    \node [blk] (t3) at (300:1) {};
    \path (s3) edge[bend left, looseness=1.5] (t3);
    \node [yshift=1.3] at ($ (s3) !.5! (t3) $) {\tiny{1}};
    \node[font=\small] at (0,0) {$M_3$};
    \node at (-1.2,0.7) {\tiny{3}};
  \end{scope}
\end{tikzpicture}
\quad \longmapsto \quad M' \,=\;
\begin{tikzpicture}[semithick, scale=0.61, bend angle=45, rotate=0,  baseline=-0.6ex] %
  \style;
  \mapMi
  \draw[->]           (180:1) arc (180:231:1);
  \draw[thick, white] (240:1) arc (240:300:1);
  \node [blk] (r1) at (180:1) {};
  \node [wht] (s1) at (240:1) {};
  \node [blk] (t1) at (300:1) {};
  \path (s1) edge[bend left, looseness=1.5] (t1);
  \node[font=\small] at (0,0) {$M_3$};
  \begin{scope}[xshift=80pt]
    \mapMi
    \draw               (180:1) arc (180:231:1);
    \draw[thick, white] (240:1) arc (240:300:1);
    \node [blk] (r2) at (180:1) {};
    \node [wht] (s2) at (240:1) {};
    \node [blk] (t2) at (300:1) {};
    \path (s2) edge[bend left, looseness=1.5] (t2);
    \node[font=\small] at (0,0) {$M_2$};
    \node at (0,-1.4) {\tiny{1}};
  \end{scope}
  \begin{scope}[xshift=160pt]
    \mapMi
    \draw               (180:1) arc (180:231:1);
    \draw[thick, white] (240:1) arc (240:300:1);
    \node [blk] (r3) at (180:1) {};
    \node [wht] (s3) at (240:1) {};
    \node [blk] (t3) at (300:1) {};
    \path (s3) edge[bend left, looseness=1.5] (t3);
    \node[font=\small] at (0,0) {$M_1$};
  \end{scope}
  \path (t1) edge[bend right=30] (s2)
        (t2) edge[bend right=30] (s3)
        (s1) edge[bend right=50, looseness=0.6] (t3);
  \node at (1.35,0.8) {\tiny{3}};
\end{tikzpicture}
$$
Any reducible bicubic map on $n$ vertices can be constructed by applying
the above operation (3) to some ordered list of irreducible bicubic maps whose
total number of vertices is $n$.

By defining operations on bicubic maps corresponding to the operations
$\lambda_i$ and $\oplus$ we have now completed the definition of the
bijection $\psi$ between bicubic maps and \btrees. Two examples of
applying $\psi$ can be found in the appendix.

\begin{proposition}\label{prop:exc}
  Let $M$ be a bicubic map, and let $\one(M) = |\F_1(M)|$ be the
  number of $1$-colored faces in $M$.  Let $T$ be a \btree, and let
  $\exc(T)$ denote the number of excessive nodes in $T$.  Let $\psi$
  be the map from bicubic maps to \btrees\ described above. Finally,
  assume that $T = \psi(M)$. Then
  \begin{align*}
    \exc(T)   &= \one(M); \\
    \root(T)  &= \rootM(M); \\
    \rmod(T)  &= \rmodM(M); \\
    \rzero(T) &= \rzeroM(M); \\
    \sub(T)   &= \subM(M).
  \end{align*}
\end{proposition}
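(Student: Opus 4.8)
The plan is to argue by induction on the number of nodes of $T$. By the construction of $\psi$ this amounts to inducting on $n=\tfrac12|V(M)|$, since each application of $\lambda_i$ adds one node and the corresponding operation~(1) or~(2) adds exactly two vertices, whereas $\oplus$ and operation~(3) only glue existing pieces together. The base case $T=\leaf$ is checked by hand on the smallest bicubic map, matching the special conventions attached to the trivial tree.

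For the inductive step I would use the two ways in which a nontrivial \btree\ decomposes. If $T$ is reducible, write $T=T_1\oplus\cdots\oplus T_k$ with the $T_j$ irreducible and nontrivial, so that $M=\psi(T)$ is assembled from $M_j=\psi(T_j)$ by operation~(3); if $T=\lambda_i(T')$ is irreducible with at least one edge, then $M$ is built from $M'=\psi(T')$ by operation~(1) when $i=\root(T')$ and by operation~(2) when $0\le i<\root(T')$. The first, routine, task is to record how the five tree statistics transform. For $\oplus$: merging the $k$ excessive roots into one excessive root gives $\exc(T)=\sum_j\exc(T_j)-(k-1)$ and $\root(T)=\sum_j\root(T_j)-(k-1)$; moreover $\sub(T)=k$; and since the rightmost path of $T$ is the new root followed by the rightmost path of $T_k$ below its old root, $\rmod(T)=\rmod(T_k)$ and $\rzero(T)=\rzero(T_k)$. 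For $\lambda_i$: $\root(\lambda_i(T'))=i+1$, $\sub(\lambda_i(T'))=1$, $\exc$ increases by one exactly when $i=\root(T')$ (precisely when the old root remains excessive), and on the rightmost path one new excessive node (the new root) sits above the old root, whose relabelling by $i$ gives $\rmod(\lambda_i(T'))=\rmod(T')+[\,i<\root(T')\ \text{or}\ T'\ \text{trivial}\,]$ and $\rzero(\lambda_i(T'))=\rzero(T')+[\,i=0\,]$.

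The substantive step is to show that $\one,\rootM,\rmodM,\rzeroM,\subM$ satisfy exactly the same recursions under operations~(1), (2), (3). For operation~(1) this is read off from the figure: one new $1$-coloured face touching the root face once is created, so $\one$ and $\rootM$ each rise by one, while the rest of the root-edge neighbourhood (the faces $R_1,R_2$ and the $\F_3$-faces touching $R_2$) changes only in the way dictated by the tree recursion above. For operation~(3) one checks that reconnecting the $M_j$ destroys exactly $k-1$ of the $1$-coloured root faces (accounting for the $-(k-1)$'s), makes $S_1$ touch $R_3$ precisely $k$ times, and leaves $R_1,R_2$ and the relevant $\F_3$-faces of $M$ equal to those of $M_k$.

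The hard part is operation~(2): with $0\le i<\root(T')$, one removes the root edge together with the first (clockwise) edge of the $(i{+}1)$-st $1$-coloured face touching $R_3$ (this is the value $i{+}1$ in the map-side parametrisation of operation~(2)), inserts the gadget, and re-roots. Here I would trace the boundary of the new root face carefully to confirm: it touches exactly $i{+}1$ of the $1$-coloured faces (matching $\root(\lambda_i(T'))=i{+}1$); no $1$-coloured face is created or destroyed (matching $\exc$ being unchanged, since $i<\root(T')$); exactly one new $\F_3$-face touching $R_2$ appears, so $\rmodM$ rises by one (as $\rmod$ does when $i<\root(T')$); and the number of black vertices incident to both $R_1$ and $R_2$ increases precisely when the cut is made at the first $1$-coloured face, i.e.\ $i=0$ (matching the $[\,i=0\,]$ term for $\rzero$). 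Once all three operations are matched with the tree recursions, the proposition follows by induction.
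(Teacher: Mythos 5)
Your proposal is correct and follows essentially the same route as the paper: induction over the recursive construction, matching the effect of $\lambda_i$ and $\oplus$ on the five tree statistics against the effect of operations (1), (2), (3) on the corresponding map statistics (with the map-side parameter of operation (2) shifted by one, as you note). The paper carries out this verification in detail only for $\rzero=\rzeroM$ and declares the remaining cases similar or simpler, whereas you sketch the recursions for all five statistics, but the method is the same.
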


\newcommand{\edge}{
  \begin{tikzpicture}[scale=0.4, semithick, baseline=3pt]
    \style;
    \node [sml] (r)       at (0,1) {};
    \node [sml] (r1)      at (0,0) {};
    \draw (r) node[left=1pt]  {\tiny{1}} -- (r1) node[left=1pt] {\tiny{0}};
  \end{tikzpicture}
}
\newcommand{\edgeM}{
  \begin{tikzpicture}[semithick, scale=0.7, baseline=-3pt]
    \style;
    \node [blk] (r) at (0,0) {};
    \node [wht] (s) at (1,0) {};
    \path     (r) edge                               (s);
    \path     (r) edge[bend left=67 , looseness=1.5] (s);
    \path[->] (r) edge[bend right=67, looseness=1.5] (s);
    \node at (-0.1, 0.31) {\tiny{3}};
    \node at ( 0.5, 0.21) {\tiny{1}};
    \node at ( 0.5,-0.21) {\tiny{2}};
  \end{tikzpicture}
}
\begin{proof}
  The proofs of these six equalities are similar, and we will only
  detail the proof of $\rzero(T) = \rzeroM(M)$; the proofs of the
  other equalities are simpler. Clearly,
  $$\rzero\left(\!\edge\,\right) = \rzeroM\left(\!\!\edgeM\right) = 1.
  $$
  Let $M'$ be a bicubic map with at least $4$ vertices. Then $M'$ can
  be constructed from one ($M$) or more ($M_1,\dots,M_k$) smaller
  bicubic maps as per the three rules above.
  \begin{itemize}
  \item[(1)] Assume that $T$ and $T'$ are the trees corresponding to
    $M$ and $M'$, respectively. Then $T' = \lambda_i(T)$ with
    $i=\root(T)$. The labels on the rightmost path of $T$ are
    preserved in $T'$, and a new nonzero (root) node is added. Thus
    $\rzero(T') = \rzero(T)$. We need to show that $\rzero(M') =
    \rzero(M)$, but this easy to see from the picture above: the only
    black vertex added is not incident to $R_1$, and the status (incident or not incident to $R_1$ and $R_2$) of each
    of the other black vertices incident to both $R_1$ and $R_2$ is
    preserved.
  \item[(2)] Here $T' = \lambda_i(T)$ with $0\leq i < \root(T)$, and
    we distinguish two sub-cases.
    \begin{itemize}
    \item[(a)] Assume that $i=0$. Comparing $T$ to $T'$ we see that
      one more zero appears on the rightmost path of $T'$, namely the
      new root. Thus $\rzero(T')=\rzero(T)+1$. On the map $M'$ we have
      just one 1-colored face touching $R_3$ and this face must be
      $R_1$. Comparing $M$ to $M'$ we see that the black vertex added
      to $M$ in order to form $M'$ is incident to both $R_1$ and
      $R_2$. The status 
      of each of the other black vertices is preserved. Thus
      $\rzeroM(M')=\rzeroM(M)+1$.
    \item[(b)] Assume that $i>0$. Clearly, $\rzero(T')=\rzero(T)$. The
      black vertex added to $M$ in order to form $M'$ is not incident
      with $R_1$, and the status of each of the other black vertices
      is preserved. Thus $\rzeroM(M')=\rzeroM(M)$.
    \end{itemize}
  \item[(3)] Assume that $T_1,\dots,T_k$ and $T'$ are the trees
    corresponding to $M_1,\dots,M_k$ and $M'$, respectively. Clearly,
    $\rzero(T')=\rzero(T_k)$. Consider $M'$: no black vertex in
    $M_1,\dots,M_{k-1}$ can contribute to the $\rzeroM$-statistic
    because such a vertex is neither incident to $R_1$ nor incident to
    $R_2$. Since the status of each of the black vertices in $M_k$ is
    preserved it follows that $\rzeroM(M') = \rzeroM(M_k)$.
  \end{itemize}
  The result now follows by induction.
\end{proof}

\section{New ways to decompose \btrees}\label{sec:new}

For any \btrees\ $T_1$, $T_2$, \dots, $T_k$ define 
$$\rho(T_1,T_2,\dots,T_k)
= \lambda_0(T_1)\oplus\lambda_0(T_2)\oplus\dots\oplus\lambda_0(T_k).
$$
Let $S$ and $T$ be \btrees. Assume that $\root(S)=1$ and that $T$ is
nontrivial. Let $i$ be an integer such that $1\leq i\leq \open(T)$,
and let $x$ denote the $i$th open node on the rightmost path of
$T$. Also, let $y$ be $x$ if $x$ is a leaf and let $y$ be the
rightmost child of $x$ otherwise. We define $\mu_i(S,T)$ as the
\btree\ obtained by identifying $x$ with the root of $S$, keeping the
label of $x$, and then adding one to each node on the rightmost path
of $T$ between the root and $y$. For instance,
$$
\mu_2\left(
\treeS,\;
\treeT
\right) \quad=\quad
\begin{tikzpicture}[yscale=0.5, xscale=0.42, semithick, baseline=(r11)]
  \style;
  \node [blk] (r)     at ( 0,4) {};
  \node [blk] (r1)    at ( 0,3) {};
  \node [sml] (r11)   at (-1.4,2) {};
  \node [sml] (r111)  at (-1.4,1) {};
  \node [blk] (r12)   at ( 1.4,2) {};
  \node [blk] (r121)  at ( 0,1) {};
  \node [wht] (r122)  at ( 1.4,1) {};
  \node [wht] (r1221) at ( 1.4,0) {};
  \node [wht] (r123)  at ( 2.8,1) {};
  \node [sml] (r1211) at ( 0,0) {};
   \draw
   (r)  -- (r1)    node[right=1pt, yshift=2pt] {3}
        -- (r11)   node[left=1pt]  {1}
        -- (r111)  node[left=1pt]  {0};
  \draw [semithick]
   (r) node[right=1pt] {4} --
   (r1) -- (r12)   node[right=1pt, yshift=2pt] {2}
        -- (r121)  node[left=0pt]  {1};
  \draw
  (r121) -- (r1211) node[left=0pt]  {0}
  (r12) -- (r122)  node[right=1pt] {0}
        -- (r1221) node[right=1pt] {0}
  (r12) -- (r123)  node[right=1pt] {0};
\end{tikzpicture}
$$
Schematically,
$$
\mu_i(S,T) =\;
\begin{tikzpicture}[scale=0.55, semithick, baseline=-23pt]
  \ostyle
  \begin{scope}[rotate=-30]
    \triangle[$T$,0](0,0);
    \node[o] at (-65:0.8) {};
    \node[o] at (-65:2*0.8) {};
    \triangle[$S$,70](-65:3*0.8);
    \node[o] at (-65:4*0.8) {};
    \node[xshift= 10pt, yshift=2pt] at (-65:0*0.8) {\footnotesize{$+1$}};
    \node[xshift= 10pt, yshift=2pt] at (-65:1*0.8) {\footnotesize{$+1$}};
    \node[xshift= 10pt, yshift=2pt] at (-65:2*0.8) {\footnotesize{$+1$}};
    \node[xshift= 10pt, yshift=2pt] at (-65:3*0.8) {\footnotesize{$+1$}};
    \node[xshift=-10pt, yshift=1pt] at (-65:4*0.8) {\footnotesize{$+1$}};
  \end{scope}
  \node at (6,-2) {the $i$th open node};
  \draw[>=stealth,->] (3.3,-2) -- (1.1,-2.3);
\end{tikzpicture}
$$
For convenience we shall also define that $\mu_1(S,\leaf)=S$.

Note that any \btree\ $U$ with $\root(U)=1$ is of the form
$\rho(T_1,T_2,\dots,T_k)$ for some \btrees\ $T_1$, $T_2$, \dots,
$T_k$. On the other hand, any \btree\ $U$ with $\root(U)>1$ can be
written $U = \mu_i(S,T)$, where $\root(S)=1$ and $T$ is
nontrivial. Indeed, the node we call $x$ above is the parent node of
the first node labelled $0$ on the right path of $U$, and knowing $x$
we trivially get $S$ and $T$.

Thus we can completely decompose any \btree\ in terms of
$\rho$ and $\mu_i$. As an example, the tree from Figure~\ref{fig:tree}
can be written
$$ \mu_2( \rho[\leaf]
        , \mu_1( \rho[\mu_2(\rho[\leaf],\mu_1(\rho[\rho[\leaf]],\rho[\leaf]))]
               , \mu_1(\rho[\leaf],\rho[\leaf,\leaf,\rho[\leaf]]))).
$$

We shall now define two additional operations $\sigma$ and $\nu_i$ on
\btrees\ that in a sense are dual to $\rho$ and $\mu_i$. We start with
$\sigma$: 
\begin{definition}\label{def:sigma}
  For \btrees\ $T_1$, \dots, $T_k$ define
  \begin{equation*}
    \sigma(T_1,\dots,T_{k}) = 
    \mu_1(\rho(T_{k-1},\dots,T_1,\leaf), T_k).
  \end{equation*}
\end{definition}
For example,
$$
\sigma\left[\,
\begin{tikzpicture}[yscale=0.5, xscale=0.35, semithick, baseline=(r1)]
  \style;
  \node [wht] (r)   at ( 0,3) {};
  \node [wht] (r1)  at ( 0,2) {};
  \node [wht] (r11) at (-1,1) {};
  \node [wht] (r12) at ( 1,1) {};
  \draw    (r)   node[left=1pt] {2}
        -- (r1)  node[left=1pt]  {1}
        -- (r11) node[below=1pt] {0}
   (r1) -- (r12) node[below=1pt] {0};
\end{tikzpicture}
,
\begin{tikzpicture}[yscale=0.5, xscale=0.35, semithick, baseline=(r1)]
  \style;
  \node [wht] (r)   at ( 0,3) {};
  \node [wht] (r1)  at ( 0,2) {};
  \draw    (r)   node[left=1pt] {1}
        -- (r1)  node[left=1pt, yshift=-2pt]  {0};
\end{tikzpicture}
\,,\;\,
\begin{tikzpicture}[yscale=0.5, xscale=0.35, semithick, baseline=(r1)]
  \style;
  \node [blk] (r)    at ( 0,3) {};
  \node [blk] (r1)   at ( 0,2) {};
  \node [blk] (r11)  at ( 0,1) {};
  \node [blk] (r111) at ( 0,0) {};
  \draw    (r)    node[left=1pt] {2}
        -- (r1)   node[left=1pt] {1}
        -- (r11)  node[left=1pt] {0}
        -- (r111) node[left=1pt] {0};
\end{tikzpicture}\;\;
\right] \quad=\quad
\begin{tikzpicture}[yscale=0.54, xscale=0.42, semithick, baseline=(r11)]
  \style;
  \node [blk] (r)     at (   0,4) {};
  \node [blk] (r1)    at (   0,3) {};
  \node [blk] (r11)   at (-1.4,2) {};
  \node [blk] (r111)  at (-1.4,1) {};
  \node [wht] (r12)   at (   0,2) {};
  \node [wht] (r121)  at (   0,1) {};
  \node [wht] (r13)   at ( 1.4,2) {};
  \node [wht] (r131)  at ( 1.4,1) {};
  \node [wht] (r1311) at ( 0.7,0) {};
  \node [wht] (r1312) at ( 2.1,0) {};
   \draw
    (r) node[right=1pt] {3}
         -- (r1)    node[right=1pt, yshift=2pt] {2}
         -- (r11)   node[left=1pt]  {1}
         -- (r111)  node[left=1pt]  {0}
   (r1)  -- (r12)   node[left=1pt]  {0}
         -- (r121)  node[left=1pt]  {0}
   (r1)  -- (r13)   node[right=1pt] {0}
         -- (r131)  node[right=1pt] {1}
         -- (r1311) node[left=1pt]  {0}
  (r131) -- (r1312) node[right=1pt] {0};
\end{tikzpicture}
$$

Let $S$ and $T$ be \btrees. Assume that $\open(S)=1$ and that $T$ is
nontrivial. Let $i$ be an integer such that $1\leq i\leq \root(T)$
and let $x$ denote the rightmost leaf of $S$. Define $\nu_i(S,T)$ as
the \btree\ obtained by identifying $x$ with the
root of $T$, keeping the (zero) label of $x$, and then adding $i-1$ to
each node on the rightmost path of $S$ between the root and $x$. For
instance,
$$\nu_2\left(
\begin{tikzpicture}[yscale=0.5, xscale=0.35, semithick, baseline=35pt]
  \style;
  \node [wht] (r)    at ( 0,4) {};
  \node [wht] (r1)   at ( 0,3) {};
  \node [wht] (r11)  at (-1,2) {};
  \node [wht] (r111) at (-1,1) {};
  \node [wht] (r12)  at ( 1,2) {};
  \draw    (r)    node[left=1pt, yshift=2pt] {2}
        -- (r1)   node[left=1pt, yshift=2pt] {1}
        -- (r11)  node[left=1pt]  {0}
        -- (r111) node[left=1pt]  {0}
   (r1) -- (r12)  node[below=1pt] {0};
\end{tikzpicture}
,\;
\begin{tikzpicture}[yscale=0.5, xscale=0.35, semithick, baseline=35pt]
  \style;
  \node [sml] (r)     at ( 0,4) {};
  \node [sml] (r1)    at ( 0,3) {};
  \node [sml] (r11)   at (-1,2) {};
  \node [sml] (r12)   at ( 1,2) {};
  \node [sml] (r121)  at ( 1,1) {};
  \draw   (r)     node[right=1pt, yshift=2pt] {3}
       -- (r1)    node[right=1pt, yshift=2pt] {2}
       -- (r11)   node[below=1pt] {0}
  (r1) -- (r12)   node[right=1pt] {1}
       -- (r121)  node[right=1pt] {0};
\end{tikzpicture}
\right) \quad=\quad
\begin{tikzpicture}[yscale=0.5, xscale=0.35, semithick, baseline=20pt]
  \style;
  \node [wht] (r)    at ( 0,4) {};
  \node [wht] (r1)   at ( 0,3) {};
  \node [wht] (r11)  at (-1,2) {};
  \node [wht] (r111) at (-1,1) {};
  \node [wht] (r12)  at ( 1,2) {};
  \node [sml] (s1)   at ( 1,1) {};
  \node [sml] (s11)  at ( 0,0) {};
  \node [sml] (s12)  at ( 2,0) {};
  \node [sml] (s121) at ( 2,-1) {};
  \draw    (r)    node[left=1pt, yshift=2pt] {3}
        -- (r1)   node[left=1pt, yshift=2pt] {2}
        -- (r11)  node[left=1pt]  {0}
        -- (r111) node[left=1pt]  {0}
   (r1) -- (r12)  node[right=1pt, yshift=2pt] {1};
  \draw    (r12)
        -- (s1)    node[right=1pt, yshift=2pt] {2}
        -- (s11)   node[below=1pt] {0}
   (s1) -- (s12)   node[right=1pt] {1}
        -- (s121)  node[right=1pt] {0};
\end{tikzpicture}
$$
For convenience we shall also define that $\nu_1(S,\leaf)=S$.

Note that any \btree\ $U$ with $\open(U)=1$ is of the form
$\sigma(T_1,T_2,\dots,T_k)$ for some \btrees\ $T_1$, $T_2$, \dots,
$T_k$, and any \btree\ $U$ with $\open(U)>1$ can be written $U =
\nu_i(S,T)$, where $\open(S)=1$ and $T$ is nontrivial. Again, using
the tree from Figure~\ref{fig:tree} as an example we have
$$ \nu_2( \sigma[\sigma[\nu_1(\sigma[\leaf,\leaf,\leaf],\sigma[\leaf])]]
        , \sigma[\nu_2(\sigma[\sigma[\leaf]],\sigma[\sigma[\leaf]])]).
$$

The behaviour of the statistics $\root$ and $\open$ under $\rho, \mu_i, \sigma$ and $\nu_i$ follows easily from the definitions.

\begin{lemma}\label{lem:behaviour}
If $T_1$, \dots, $T_k$, $S$ and $T$
  are \btrees, then
  \begin{align}
    \open(\rho(T_1,\dots,T_k)) &= 1+\open(T_k),    \label{1b} \\
    \root(\mu_i(S,T))&=1+\root(T), \label{rm} \\ 
     \open(\mu_i(S,T)) &= i - 1 + \open(S),         \label{2d} \\   
 \root(\sigma(T_1,\dots,T_k)) &= 1+\root(T_k),  \label{2a} \\
\root(\nu_i(S,T)) &= i - 1 + \root(S),         \label{1c} \\
   \open(\nu_i(S,T))&= 1+ \open(T), \label{on}
      \end{align}
  where in \eqref{rm} and \eqref{2d} we
  assume that $\root(S)=1$, and in \eqref{1c} and \eqref{on} we assume that $\open(S)=1$.
\end{lemma}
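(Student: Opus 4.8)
The plan is to check the six identities by tracking, in each composite tree, (i) the label of the root and (ii) which nodes on the rightmost path are open. Two elementary observations do most of the work. First, by the definition of an open node, whether a node on the rightmost path is open depends only on its rightmost child — it is open exactly when that child is a non-leaf moderate node — so relabelling a node does not change its own open/closed status. Second, if one adds a constant to a node $u$ and to its rightmost child simultaneously, then the difference ``label of $u$ minus the sum of the labels of the children of $u$'', call it the \emph{excess} of $u$, is unchanged; since the relabellings built into $\mu_i$ and $\nu_i$ add $1$ (respectively $i-1$) to an entire top segment of a rightmost path, every node of that segment keeps its excess, hence its moderate/excessive status, and in particular the topmost node — the root — is shifted by exactly $1$ (respectively $i-1$).

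The three identities about $\root$ follow quickly. For \eqref{rm} the root of $\mu_i(S,T)$ is the root of $T$, which lies on the segment (from the root of $T$ down to $y$) that is incremented by $1$, so $\root(\mu_i(S,T))=\root(T)+1$; the degenerate case $T=\leaf$ is the convention $\mu_1(S,\leaf)=S$ together with the hypothesis $\root(S)=1$. Identity \eqref{1c} is the mirror statement on the $S$-side of $\nu_i$: the root of $S$ lies on the segment incremented by $i-1$, so $\root(\nu_i(S,T))=\root(S)+i-1$. For \eqref{2a} one first notes that $\root(\rho(T_1,\dots,T_k))=1$ — $\rho$ is the $\oplus$-sum of $k$ trees of the form $\lambda_0(\cdot)$, each with root label $1$, and an $\oplus$-sum of $k$ trees with root labels summing to $k$ has root label $k-(k-1)=1$ — and then applies \eqref{rm} to $\sigma(T_1,\dots,T_k)=\mu_1(\rho(T_{k-1},\dots,T_1,\leaf),T_k)$ when $T_k$ is nontrivial, the case $T_k=\leaf$ reducing to $\mu_1(S,\leaf)=S$.

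For the three identities about $\open$ I would in each case describe the rightmost path of the composite explicitly and count open nodes piece by piece. In $\rho(T_1,\dots,T_k)$ the rightmost path is the new root (label $1$) followed by the rightmost path of $T_k$ with the label of $\root(T_k)$ changed to $0$; that relabelled node is moderate, relabelling it changes the open status of no node of that path, and the new root becomes open exactly because its rightmost child — the relabelled root of $T_k$ — is then a non-leaf moderate node. This yields \eqref{1b}, the case $T_k=\leaf$ being immediate. In $\mu_i(S,T)$ the rightmost path is the rightmost path of $T$ cut off at $x$ (the $i$th open node, counted from the root) followed by the rightmost path of the copy of $S$ grafted at $x$: by the excess-preservation observation every proper ancestor of $x$ keeps its open status, so these contribute exactly the $i-1$ open nodes of $T$ lying above $x$; and the segment from $x$ onward is a copy of the rightmost path of $S$ in which only the label of $x$ may differ from that of $S$'s root — here one uses $\root(S)=1$, so the children grafted onto $x$ sum to $0$ and the excess of $x$, hence the open status of its parent, is unchanged — contributing $\open(S)$ more. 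Adding these gives \eqref{2d}. In $\nu_i(S,T)$ the rightmost path is the rightmost path of $S$ down to its rightmost leaf $x$, followed by the rightmost path of the copy of $T$ grafted at $x$: the segment from $x$ onward is again a relabelled copy of the rightmost path of $T$ and contributes $\open(T)$ open nodes; above $x$, every proper ancestor is closed in $S$ because $\open(S)=1$ forces the unique open node of $S$ to be the rightmost leaf $x$ itself, and the relabelling changes exactly one of them — the parent of $x$ now becomes open, since $x$, now the root of the attached $T$, has label $i-1$ and children summing to $\root(T)-1$, hence excess $i-\root(T)\le 0$ (using $i\le\root(T)$), so $x$ is now a non-leaf moderate node. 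This one extra open node is the $+1$ in \eqref{on}.

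I expect the routine part to be the segment-by-segment comparison of open nodes. The delicate point is the behaviour at the junction node $x$ in \eqref{2d} and \eqref{on}: checking that the excess of $x$, and hence the open status of its parent, changes exactly as claimed is precisely where the hypotheses $\root(S)=1$ (for $\mu_i$) and $\open(S)=1$, $i\le\root(T)$ (for $\nu_i$) enter. Beyond that, one has to clear the degenerate cases — a $\leaf$ argument, and $k=1$ in \eqref{1b} — where the convention $\open(\leaf)=0$ must be reconciled by hand with the fact that the rightmost leaf is always open.
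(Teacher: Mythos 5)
Your verification is correct and is in essence the paper's own argument: the paper gives no proof of this lemma beyond the remark that it ``follows easily from the definitions'', and a direct check along the rightmost path, tracking which nodes keep their excess and hence their open status, is exactly what is meant. One small point to patch in your treatment of \eqref{2d}: the assertion that the excess of $x$ is unchanged fails in the boundary case where $x$ is the rightmost leaf of $T$ (that is, $i=\open(T)$), since there the excess of $x$ goes from $0$ to $1$ (label $0\mapsto 1$, grafted children summing to $0$); the conclusion you actually need --- that the parent of $x$ keeps its open status --- still holds, because that parent was closed before ($x$ was a leaf) and remains closed after ($x$ is now excessive), so the identity itself is unaffected, and the same care should be taken with your blanket claim that every node of the incremented segment keeps its excess, which is false for the bottom node of the segment but harmless since that node either leaves the rightmost path (the node $y$ in $\mu_i$) or is recomputed explicitly by you (the node $x$ in $\nu_i$).
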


We can now give a generating function proof of Theorem \ref{thm}. \\

\begin{proof}[Proof of Theorem \ref{thm}]
  Let $F(x,y) := F(t,x,y)$ be the generating function for \btrees\ where
  $t$ marks the number of edges, $x$ marks the $\root$ statistic, and
  $y$ marks the $\rmod$ statistic.  We claim that 
  $$ F(x,y) = 1 + xS + \frac{x}{y-1} S\bigl(F(x,y)-F(x,1)\bigr),
  $$
  where $S := tyF(1,y) / (1-tF(1,1))$.
Indeed, the second and third summands correspond to  \btrees\ with root label equal to or greater than $1$, respectively, generated with the $(\rho,\mu_i)$
  decomposition. (Recall that the statistics $\rmod$ and $\open$ coincide.)

  Let now $G(x,y) := G(t,x,y)$ be the generating function for \btrees\
  where $t$ marks the number of edges, $x$ marks the $\rmod$
  statistic, and $y$ marks the $\root$ statistic.  This time using the
  $(\sigma,\nu_i)$ decomposition we have
  $$G(x,y) = 1 + xT + \frac{x}{y-1} T\bigl(G(x,y)-G(x,1)\bigr),
  $$
  where $T := tyG(1,y) / (1-tG(1,1))$. In this case the second and third summands correspond to \btrees\ with the $\rmod$ statistic equal to or greater than $1$, respectively.

  Since $F(x,y)$ and $G(x,y)$ satisfy the same equation with the same
  initial conditions $F(1,1)=G(1,1)$ being the generating function for
  \btrees, we must have $F(x,y)=G(x,y)$. On the other hand, by
  definition $F(x,y)=G(y,x)$. Thus, $F(x,y)=F(y,x)$ which proves
  Theorem \ref{thm} via the respective statistics on bicubic maps and
  \btrees.
\end{proof}

\section{Bicolored trees}

If we look at the parse tree of an expression of a \btree\ in terms of
$\sigma$ and $\nu_i$ (or $\rho$ and $\lambda_i$) we arrive at a new
tree. For instance, writing the tree from Figure~\ref{fig:tree} in
terms of $\sigma$ and $\nu_i$, as above, we arrive at
$$ %
\begin{tikzpicture}[yscale=0.5, xscale=0.3, semithick, baseline=20pt]
  \style;
  \node [wht] (r)      at ( 0,5) {};
  \node [sml] (r1)     at (-2,4) {};
  \node [sml] (r11)    at (-2,3) {};
  \node [wht] (r111)   at (-2,2) {};
  \node [sml] (r1111)  at (-3,1) {};
  \node [sml] (r11111) at (-4,0) {};
  \node [sml] (r11112) at (-3,0) {};
  \node [sml] (r11113) at (-2,0) {};
  \node [sml] (r1112)  at (-1,1) {};
  \node [sml] (r11121) at (-1,0) {};
  \node [sml] (r2)     at ( 2,4) {};
  \node [wht] (r21)    at ( 2,3) {};
  \node [sml] (r211)   at ( 1,2) {};
  \node [sml] (r2111)  at ( 1,1) {};
  \node [sml] (r21111) at ( 1,0) {};
  \node [sml] (r212)   at ( 3,2) {};
  \node [sml] (r2121)  at ( 3,1) {};
  \node [sml] (r21211) at ( 3,0) {};
  \draw    (r)     node[above left=0pt] {2}
        -- (r1)
        -- (r11)
        -- (r111)  node[left=1pt, yshift=2pt] {1}
        -- (r1111)
        -- (r11111)
(r1111) -- (r11112)
(r1111) -- (r11113)
 (r111) -- (r1112)
        -- (r11121)
    (r) -- (r2)
        -- (r21)   node[right=1pt, yshift=2pt] {2}
        -- (r211)
        -- (r2111)
        -- (r21111)
  (r21) -- (r212)
        -- (r2121)
        -- (r21211);
\end{tikzpicture}
$$
where an internal black node corresponds to $\sigma$ and a white node
labeled $i$ corresponds to $\nu_i$.
\newcommand{\T}{\mathcal{T}}

Let $\T$ denote the set of trees that can be obtained from \btrees\ in
this manner. Then it is not hard to see that $\T$ has the following
recursive characterization. A member of $\T$ is a rooted plane tree on
white and black nodes such that either the root is black and is
connected to a possibly empty list of trees in $\T$, or the root is
white, has a label $i$, is connected to exactly two trees $T_1,T_2\in
T$, and $1 \leq i\leq \kappa(T_2)$, where $\kappa$ is defined by
recursion: $\kappa$ of a leaf is $0$; $\kappa$ of a black node
connected to $T_1$, \dots ,$T_k$ is $1+\kappa(T_k)$; and $\kappa$ of a
white node labeled $i$, connected to $T_1$ and $T_2$, is
$i-1+\kappa(T_1)$. If, in addition, we define the \emph{weight}
of a tree in $\T$ to be the number of black nodes minus the number of
white nodes, then we have established that there is a one-to-one
correspondence between \btrees\ on $n$ nodes and trees in $\T$ of
weight $n$.

In the next section we shall define an endofunction on \btrees. One
way to understand this endofunction is that we map a \btree\ $T$ to a
\btree\ $T'$ if the $(\sigma, \nu_i)$ parse tree of $T$ is the same as
the $(\rho, \mu_i)$ parse tree of $T'$. We will prove that this
endofunction is an involution.

\section{An involution on \btrees}

The following three lemmas are immediate from the definitions of $\rho$,
$\mu_i$, $\sigma$ and $\nu_i$; they will be used in the proof of
Lemma~\ref{lemma:dual}.

\begin{lemma}\label{lemma:rho-nu}
  For all \btrees\ $T_1$, \dots, $T_k$ we have
  \begin{equation*}
    \rho(T_1,\dots,T_k) = \nu_1(\sigma(T_{k-1},\dots,T_1,\leaf), T_k).
  \end{equation*}
\end{lemma}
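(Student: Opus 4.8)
The plan is to unfold both sides using the definitions of $\sigma$, $\rho$, $\mu_i$, $\nu_i$ together with the degenerate conventions $\mu_1(S,\leaf)=S$ and $\nu_1(S,\leaf)=S$, and then to observe that the two resulting \btrees\ are literally identical; the lemma is really just careful bookkeeping.

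First I would simplify the inner term on the right. Applying Definition~\ref{def:sigma} to the length-$k$ list $(T_{k-1},\dots,T_1,\leaf)$, the index reversal built into $\sigma$ exactly undoes the reversal of the argument, so $\sigma(T_{k-1},\dots,T_1,\leaf)=\mu_1\bigl(\rho(T_1,\dots,T_{k-1},\leaf),\,\leaf\bigr)$, and since the second argument is trivial this equals $\rho(T_1,\dots,T_{k-1},\leaf)$ by the convention $\mu_1(S,\leaf)=S$. Thus the identity to prove reduces to $\rho(T_1,\dots,T_k)=\nu_1\bigl(\rho(T_1,\dots,T_{k-1},\leaf),\,T_k\bigr)$.

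Next I would check the right-hand side is legitimate and evaluate it. Write $S=\rho(T_1,\dots,T_{k-1},\leaf)=\lambda_0(T_1)\oplus\cdots\oplus\lambda_0(T_{k-1})\oplus\lambda_0(\leaf)$. By \eqref{1b} of Lemma~\ref{lem:behaviour}, $\open(S)=1+\open(\leaf)=1$, so $\nu_1(S,\cdot)$ is well defined (and if $T_k=\leaf$ the convention $\nu_1(S,\leaf)=S$ makes the two sides agree at once). From the explicit shape of $\rho$ via $\oplus$, the rightmost leaf $x$ of $S$ is the single child of the merged root contributed by the last summand $\lambda_0(\leaf)$, and it carries label $0$. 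Since $i=1$, forming $\nu_1$ performs no label shift along the rightmost path (we add $i-1=0$): it only identifies $x$ with the root of $T_k$, keeping the label $0$ at $x$. The change is entirely local to the last summand, which turns from $\lambda_0(\leaf)$ into the $\lambda_0$-block consisting of a node labelled $0$ (the former leaf, now $T_k$'s root) carrying the subtrees of $T_k$ — that is, exactly $\lambda_0(T_k)$. Hence the whole tree becomes $\lambda_0(T_1)\oplus\cdots\oplus\lambda_0(T_{k-1})\oplus\lambda_0(T_k)=\rho(T_1,\dots,T_k)$, as required; one checks in passing that the root label stays $1$ on both sides, consistent with \eqref{1c} for $i=1$.

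Finally I would dispatch the edge cases: for $k=1$ the first step gives $\sigma(\leaf)=\rho(\leaf)=\lambda_0(\leaf)$ and grafting $T_1$ at its leaf yields $\lambda_0(T_1)$; and any $T_i$ equal to $\leaf$ is harmless since $\lambda_0(\leaf)$ is simply an edge. The only point needing the slightest care is the identification of the rightmost leaf of $S$ with the leaf of the terminal $\lambda_0(\leaf)$ block and the fact that $\open(S)=1$, so that $\nu_1$ acts precisely there and nowhere else; both are immediate from \eqref{1b} and the description of $\rho$ through $\oplus$. There is no substantive obstacle here — the whole argument is definitional unwinding, which is why the statement is asserted to be immediate.
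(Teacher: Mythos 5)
Your proof is correct and takes the same route the paper intends: the paper offers no argument beyond declaring the lemma immediate from the definitions of $\rho$, $\mu_i$, $\sigma$ and $\nu_i$, and your unwinding — reducing via $\sigma(T_{k-1},\dots,T_1,\leaf)=\mu_1(\rho(T_1,\dots,T_{k-1},\leaf),\leaf)=\rho(T_1,\dots,T_{k-1},\leaf)$ and then noting that $\nu_1$ grafts $T_k$ (root relabelled $0$) onto the rightmost leaf with no label shift, giving $\lambda_0(T_1)\oplus\cdots\oplus\lambda_0(T_k)$ — is exactly the bookkeeping that claim calls for. The degenerate cases $k=1$ and $T_k=\leaf$ are correctly dispatched via the conventions $\mu_1(S,\leaf)=S$ and $\nu_1(S,\leaf)=S$, and the check $\open(S)=1$ via \eqref{1b} is the right justification that $\nu_1$ applies.
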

Note the similarity between Lemma~\ref{lemma:rho-nu} and
Definition~\ref{def:sigma}.

\begin{lemma}\label{lemma:nu-mu}
  Let $R$, $S$ and $T$ be \btrees. If $\open(R)=\root(S)=1$, and $T$
  is nontrivial, then, for integers  $i\geq 1$ and $j\geq 1$, we have
  $$\nu_{i+1}(R, \mu_{j}(S,T)) = \mu_{j+1}(S, \nu_{i}(R,T)).
  $$
\end{lemma}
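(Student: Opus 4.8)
The plan is to prove the identity by unwinding the definitions of $\mu_i$ and $\nu_i$ directly, organising the bookkeeping around the rightmost path of each tree. First I would fix notation: write $T$ by its rightmost path $v_0,v_1,\dots,v_m$ (with $v_0$ the root and $v_m$ the rightmost leaf) together with the ordered lists $L_0,\dots,L_m$ of subtrees hanging to the left of each $v_k$; similarly record $R$ as $u_0,\dots,u_p$ with left-hanging lists $K_0,\dots,K_p$, and $S$ as $w_0,\dots,w_q$ with $M_0,\dots,M_q$. Here $\root(S)=1$; $T$ nontrivial gives $m\ge1$; and because $\open(R)=1$, the unique open node of $R$ is its rightmost leaf $u_p$, so $p\ge1$ and none of $u_0,\dots,u_{p-1}$ is open. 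For both sides of the asserted equality to be defined one also needs $1\le j\le\open(T)$ and $1\le i\le\root(T)$, which I take for granted.

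The conceptual core is a single observation about $B:=\nu_i(R,T)$. Grafting $T$ at $u_p$ makes $u_p$ a non-leaf node whose children are those of the root of $T$, so its children's labels sum to $\root(T)-1$; after the $\nu_i$-relabelling $u_p$ has label $i-1$, and the hypothesis $i\le\root(T)$ says exactly that $u_p$ is then \emph{moderate}. Hence $u_{p-1}$ — which exists, and was \emph{not} open in $R$ — becomes open in $B$, and it is the \emph{first} open node of $B$; the other open nodes of $B$ are precisely the nodes of $B$ coming from the open nodes of $T$, in the same order (the root of $T$ being replaced by $u_p$, and $v_k$ kept for $k\ge1$), since the relabellings preserve moderacy along the spine and so do not affect openness there. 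This is consistent with $\open(B)=1+\open(T)$ from Lemma~\ref{lem:behaviour}, and it shows that the $(j+1)$th open node of $B$ is the node coming from the $j$th open node $x=v_s$ of $T$ — exactly where $\mu_j$ grafts $S$ on the left-hand side. That is why the index shifts from $j$ to $j+1$.

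With this in hand I would compute both sides in the rightmost-path representation (spine, labels, hanging lists) and check that they agree, splitting according to where $x=v_s$ lies: the case $s=0$ (forcing $j=1$, so $x$ is the root of $T$), the case $0<s<m$, and the case $s=m$ (forcing $j=\open(T)$, so $x$ is the rightmost leaf of $T$). In each case both trees turn out to be ``$R$ with $T$ grafted at $u_p$ — the root of $T$ absorbed into $u_p$, which keeps its $0$-label — and $S$ grafted at the node coming from $v_s$''. The labels match for three reasons: along the $R$-spine $u_0,\dots,u_p$ the left side adds $i$ (via $\nu_{i+1}$) while the right side adds $i-1$ (via $\nu_i$) and then, since that spine lies on the path from the root of $B$ to the node $y$ chosen by $\mu_{j+1}$, a further $1$ — total $i$ on both; along the portion of the $T$-spine from $v_1$ down to $y$ (including the root of the subtree detached below $x$ when $s<m$) both sides add exactly $1$, on the left from $\mu_j$ and on the right from $\mu_{j+1}$; and the $+1$ that each side would nominally assign to the root of $T$ is immaterial since that node is discarded on both sides, with all remaining labels inherited unchanged from $R$, $S$, or $T$.

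The three-way case check is routine; the one genuinely delicate point — the step I would be most careful with — is the open-node analysis of $B$: one must verify that grafting $T$ onto the rightmost leaf of $R$ creates exactly one new open node, that it precedes every node of $B$ coming from an open node of $T$, and that it is distinct from all of them. This is precisely where $\open(R)=1$ (nothing above $u_{p-1}$ is open) and $i\le\root(T)$ ($u_p$ becomes moderate) are used. The only other thing to watch is which child becomes rightmost after each graft, so that ``the rightmost path'' is correctly identified throughout; with that pinned down, equality of the two trees is immediate.
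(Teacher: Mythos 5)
Your argument is correct, and it is essentially the intended one: the paper gives no written proof of this lemma (it is declared immediate from the definitions), and your direct unwinding of $\mu_j$ and $\nu_i$ along the rightmost paths is exactly that verification, supplied in more detail than the paper. In particular, your central observation — that in $\nu_i(R,T)$ the grafted node $u_p$ is moderate precisely because $i\le\root(T)$, so $u_{p-1}$ becomes the unique first open node while the remaining open nodes are the images of those of $T$, whence the $(j+1)$th open node is where $\mu_j$ grafted $S$, after which the label bookkeeping ($(i-1)+1$ versus $i$ on the $R$-spine, $+1$ on the $T$-spine down to $y$, the root label of $T$ discarded on both sides) closes the identity — is the correct and complete justification of the index shift that the paper leaves unstated.
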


\begin{lemma}\label{lemma:m1}
  Let $R$, $S$ and $T$ be \btrees. If $\root(R)=\open(R)=1$,
  then
  $$\mu_1(\nu_1(R,S),T) = \nu_1(\mu_1(R,T),S).
  $$
\end{lemma}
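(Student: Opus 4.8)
The plan is to unfold both sides and verify that they describe the very same labelled tree; as the text preceding the lemma indicates, everything follows directly from the definitions, so the work is entirely bookkeeping along the rightmost path.

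First I would dispose of the trivial cases for $S$ and $T$ (note that $R$ is automatically nontrivial, since $\root(R)=1$). If $T=\leaf$, then the left side is $\mu_1(\nu_1(R,S),\leaf)=\nu_1(R,S)$ and the right side is $\nu_1(\mu_1(R,\leaf),S)=\nu_1(R,S)$; if $S=\leaf$, then the left side is $\mu_1(\nu_1(R,\leaf),T)=\mu_1(R,T)$ and the right side is $\nu_1(\mu_1(R,T),\leaf)=\mu_1(R,T)$. So assume from now on that $S$ and $T$ are both nontrivial, so that $\root(S)\ge1$ and $\open(T)\ge1$. By Lemma~\ref{lem:behaviour}, equations \eqref{1c} and \eqref{2d} (applied with $i=1$, using $\open(R)=1$ and $\root(R)=1$ respectively) give $\root(\nu_1(R,S))=\root(R)=1$ and $\open(\mu_1(R,T))=\open(R)=1$; together with $\root(S)\ge1$ and $\open(T)\ge1$ this shows that all of $\nu_1(R,S)$, $\mu_1(\nu_1(R,S),T)$, $\mu_1(R,T)$ and $\nu_1(\mu_1(R,T),S)$ are legitimately defined.

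Next I would name the common target. Let $x$ be the open node on the rightmost path of $T$ that is used in forming $\mu_1(\,\cdot\,,T)$, and let $y$ be $x$ if $x$ is a leaf and the rightmost child of $x$ in $T$ otherwise. Let $r$ be the rightmost leaf of $R$; since $\root(R)=1$, the root of $R$ has at least one child, each of label $0$, and $r$ lies strictly below its rightmost child, so $r\neq\root(R)$. Let $W$ be the tree obtained from $T$ by the following three modifications: (i) identify $x$ with the root of $R$, keeping the label of $x$ and placing the children of $R$'s root immediately to the right of the children of $x$ (the root label $1$ of $R$ is thereby discarded); (ii) identify $r$ with the root of $S$, keeping the label $0$ of $r$ and placing the children of $S$'s root immediately to the right; (iii) add $1$ to every node of $T$ lying on the rightmost path from the root of $T$ to $y$, inclusive. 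The crucial point is that these three modifications act on pairwise disjoint parts of the tree: the segment in (iii) lies entirely inside $T$, while $r$ and all nodes contributed by $S$ in (ii) lie strictly below $x$, inside the grafted copy of $R$. Hence the modifications commute, and $W$ is independent of the order in which they are performed.

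It remains to identify each side with $W$. On the left, $\nu_1(R,S)$ is just $R$ with modification (ii) applied (for $i=1$ the relabelling $+(i-1)$ in $\nu_1$ is trivial), and then $\mu_1(\nu_1(R,S),T)$ grafts this tree into $T$ at $x$ exactly as in (i) and applies the increment (iii); so the left side is $W$ built in the order (ii), (i), (iii). On the right, $\mu_1(R,T)$ is $T$ after (i) and (iii); its rightmost path runs from the root of $T$ down to $x$ and then, through the newly attached rightmost child of $R$'s root, down $R$ to $r$, so its rightmost leaf is $r$, and $r$ still carries label $0$ because (iii) touches only nodes of $T$. Hence $\nu_1(\mu_1(R,T),S)$ grafts $S$ at this rightmost leaf $r$ as in (ii) (again with trivial relabelling), and the right side is $W$ built in the order (i), (iii), (ii). Therefore $\mu_1(\nu_1(R,S),T)=W=\nu_1(\mu_1(R,T),S)$. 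The only real obstacle --- and it is routine --- is precisely the bookkeeping in this last paragraph: confirming that the rightmost leaf of $\mu_1(R,T)$ is the rightmost leaf $r$ of $R$ and still has label $0$, so that $\nu_1(\,\cdot\,,S)$ on the right attaches $S$ at exactly the place where $\nu_1(R,S)$ puts it on the left, and that neither graft perturbs the nodes $x$ and $y$ or the portion of $T$'s rightmost path that is incremented.
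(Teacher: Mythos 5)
Your argument is correct: the well-definedness checks via Lemma~\ref{lem:behaviour}, the identification of the rightmost leaf of $\mu_1(R,T)$ with the rightmost leaf $r$ of $R$ (still labelled $0$), and the observation that the graft at $x$, the graft at $r$, and the $+1$ relabelling along $T$'s rightmost path act on disjoint parts and hence commute, together establish the identity, including the boundary cases $S=\leaf$ and $T=\leaf$. This is essentially the paper's approach, since the paper offers no separate argument and simply declares the lemma immediate from the definitions of $\mu_1$ and $\nu_1$; your write-up just makes that bookkeeping explicit.
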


\begin{definition}\label{def:g}
  Let $T_1$, \dots, $T_k$, $S$ and $T$ be \btrees, and assume
  $\root(S)=1$. Define the map $g$ on \btrees\ of size $n$ by
  \begin{enumerate}
  \item $g(\leaf) = \leaf$;                                      \label{g:leaf}
  \item $g(\rho(T_1,\dots,T_k)) = \sigma(g(T_1),\dots,g(T_k))$;  \label{g:rho}
  \item $g(\mu_i(S,T)) = \nu_i(g(S), g(T))$.                     \label{g:mu}
  \end{enumerate}
\end{definition}
Note that there is a subtlety in this definition. In case
\eqref{g:mu}, we apply $\nu_i$ to $g(S)$, so we need to make sure that
$\open(g(S))=1$. But we are fine because, as $\root(S)=1$ then $S$ is
$\rho(T_1,\ldots,T_k)$, so to compute $g(S)$ we would use case
\eqref{g:rho} and the image under $\sigma$ of any sequence of trees has just one open node.

Here is an example of applying $g$:\vspace{-1ex}
$$\figtree\;\begin{array}{c}g\\\longmapsto\end{array}\;\quad
  \begin{tikzpicture}[xscale=0.37, yscale=0.5, semithick, baseline=(r)]
    \style;
    \node [sml] (r)       at ( 0,5) {};
    \node [sml] (r1)      at ( 0,4) {};
    \node [sml] (r11)     at (-2,3) {};
    \node [sml] (r111)    at (-2,2) {};
    \node [sml] (r1111)   at (-2,1) {};
    \node [sml] (r11111)  at (-2,0) {};
    \node [sml] (r12)     at ( 2,3) {};
    \node [sml] (r121)    at ( 2,2) {};
    \node [sml] (r1211)   at ( 2,1) {};
    \node [sml] (r12111)  at (0.5,0) {};
    \node [sml] (r12112)  at ( 2,0) {};
    \node [sml] (r12113)  at (3.5,0) {};
    \draw (r)      node[above=2pt] {2} --
          (r1)     node[above right=-1pt] {1} --
          (r11)    node[left=2pt] {2} --
          (r111)   node[left=2pt] {1} --
          (r1111)  node[left=2pt] {0} --
          (r11111) node[left=2pt] {0}
          (r1) --
          (r12)    node[right=2pt] {0} --
          (r121)   node[right=2pt] {0} --
          (r1211)  node[right=2pt, yshift=1pt] {1} --
          (r12111) node[below=2pt] {0}
          (r1211) --
          (r12112) node[below=2pt] {0}
          (r1211) --
          (r12113) node[below=2pt] {0};
  \end{tikzpicture}
$$
For a larger example see the appendix, where two \btrees\ (and
associated bicubic maps) corresponding to each other under $g$ are
given.

\begin{lemma}\label{lemma:dual}
  If $T_1$, \dots, $T_k$, $S$ and $T$ are \btrees, and $\open(S)=1$,
  then
  \begin{enumerate}
  \item\label{dual-1} $g(\sigma(T_1,\dots,T_k)) = \rho(g(T_1),\dots,g(T_k))$;
  \item\label{dual-2} $g(\nu_i(S,T)) = \mu_i(g(S), g(T))$.
  \end{enumerate}
\end{lemma}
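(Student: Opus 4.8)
The plan is to establish~\eqref{dual-1} directly from the definitions (no induction needed) and then~\eqref{dual-2} by induction on the number $n$ of nodes of $\nu_i(S,T)$, using~\eqref{dual-1} freely along the way. Throughout I will use that $g$ preserves the number of nodes (hence non-triviality), that $\root(\rho(\cdots))=\open(\sigma(\cdots))=1$, and the conventions $\mu_1(U,\leaf)=U=\nu_1(U,\leaf)$.

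For~\eqref{dual-1}: Definition~\ref{def:sigma} writes $\sigma(T_1,\dots,T_k)$ as $\mu_1(\rho(T_{k-1},\dots,T_1,\leaf),T_k)$; since the inner $\rho$-tree has root label $1$, rule~\eqref{g:mu} turns $g$ of this into $\nu_1(g(\rho(T_{k-1},\dots,T_1,\leaf)),g(T_k))$, rule~\eqref{g:rho} rewrites the first argument as $\sigma(g(T_{k-1}),\dots,g(T_1),\leaf)$, and a single application of Lemma~\ref{lemma:rho-nu} (to the trees $g(T_1),\dots,g(T_k)$) collapses the result to $\rho(g(T_1),\dots,g(T_k))$. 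The only care needed is to check, at each unfolding of $g$, that the tree fed to $\mu_1$ or $\nu_1$ meets the required $\root=1$ or $\open=1$ hypothesis; this is automatic here, and the degenerate case $T_k=\leaf$ is absorbed by the conventions.

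For~\eqref{dual-2} I split on the value of $i$. When $i\geq 2$, necessarily $\root(T)>1$, so $T=\mu_j(S',T')$ with $\root(S')=1$ and $T'$ nontrivial; Lemma~\ref{lemma:nu-mu} rewrites $\nu_i(S,T)$ as $\mu_{j+1}(S',\nu_{i-1}(S,T'))$, after which rule~\eqref{g:mu} and the induction hypothesis (applied to the strictly smaller tree $\nu_{i-1}(S,T')$) produce $\nu_{j+1}(g(S'),\mu_{i-1}(g(S),g(T')))$; on the other side, rule~\eqref{g:mu} turns $\mu_i(g(S),g(T))$ into $\mu_i(g(S),\nu_j(g(S'),g(T')))$, and a second application of Lemma~\ref{lemma:nu-mu} — which uses $\root(g(S))=1$, true by~\eqref{dual-1} since $\open(S)=1$ makes $S$ a $\sigma$ — identifies the two expressions. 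When $i=1$ and $T$ is nontrivial (the case $T=\leaf$ being immediate from the conventions), I split on $\root(S)$. If $\root(S)>1$, the hypothesis $\open(S)=1$ forces $S=\mu_1(S'',S''')$ with $\root(S'')=\open(S'')=1$ and $S'''$ nontrivial (by~\eqref{2d}); Lemma~\ref{lemma:m1} moves the outer $\nu_1$ inward, and rule~\eqref{g:mu}, the induction hypothesis on the smaller tree $\nu_1(S'',T)$, and a second use of Lemma~\ref{lemma:m1} (now with $\root(g(S''))=\open(g(S''))=1$) bring $g(\nu_1(S,T))$ and $\mu_1(g(S),g(T))$ to a common form. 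If $\root(S)=1$, then $\open(S)=1$ forces $S=\rho(V_1,\dots,V_{p-1},\leaf)$, and the crucial ingredient is the algebraic identity $\nu_1(\rho(V_1,\dots,V_{p-1},\leaf),T)=\rho(V_1,\dots,V_{p-1},T)$, obtained by applying Lemma~\ref{lemma:rho-nu} to $\rho(V_1,\dots,V_{p-1},T)$ and to $\rho(V_1,\dots,V_{p-1},\leaf)$ and using $\nu_1(U,\leaf)=U$; granting it, both $g(\nu_1(S,T))$ and $\mu_1(g(S),g(T))$ reduce, via rule~\eqref{g:rho} and Definition~\ref{def:sigma}, to $\sigma(g(V_1),\dots,g(V_{p-1}),g(T))$.

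The main obstacle is the $i=1$ branch of~\eqref{dual-2}: one has to notice that $\open(S)=1$ pins $S$ down to exactly two shapes and — most importantly — that $\nu_1(\rho(\dots,\leaf),T)=\rho(\dots,T)$ is a formal consequence of Lemma~\ref{lemma:rho-nu}, not something that must be argued by hand from the grafting description of $\nu_1$. By contrast, the $i\geq 2$ branch and~\eqref{dual-1} amount to bookkeeping with Lemmas~\ref{lemma:rho-nu}, \ref{lemma:nu-mu} and~\ref{lemma:m1} and the defining rules of $g$; the recurring thing to verify is that every $\mu$- and $\nu$-application arising is legal, which always reduces to the observation that a tree with root label $1$ is a $\rho$, so its $g$-image is a $\sigma$, which has exactly one open node.
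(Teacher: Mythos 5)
Your proof is correct and follows essentially the same route as the paper: part (1) directly from Definition~\ref{def:sigma}, the defining rules of $g$ and Lemma~\ref{lemma:rho-nu}, and part (2) by the same three-way case split (via Lemmas~\ref{lemma:nu-mu}, \ref{lemma:m1} and \ref{lemma:rho-nu}) with induction. The only (harmless) variation is in the case $i=1$, $\root(S)=1$, where you write $S$ in $\rho$-form and use the identity $\nu_1(\rho(V_1,\dots,V_{p-1},\leaf),T)=\rho(V_1,\dots,V_{p-1},T)$, while the paper writes $S$ in $\sigma$-form and applies Lemma~\ref{lemma:rho-nu} together with part (1); these are equivalent.
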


\begin{proof}
  We have
  \begin{align*}
    g(\sigma(T_1,\dots,T_k))
    &= g(\mu_1(\rho(T_{k-1},\dots,T_1,\leaf), T_k))
    &&\text{by~Definition~\ref{def:sigma}}\\
    &= \nu_1(g(\rho(T_{k-1},\dots,T_1,\leaf)), g(T_k))
    &&\text{by Definition~\ref{def:g}}\\
    &= \nu_1(\sigma(g(T_{k-1}),\dots,g(T_1),\leaf)), g(T_k))
    &&\text{by Definition~\ref{def:g}}\\
    &= \rho(g(T_1),\dots,g(T_k))
    &&\text{by Lemma~\ref{lemma:rho-nu}}
  \end{align*}
  which proves \eqref{dual-1}. To prove \eqref{dual-2} we first note
  that $\root(\nu_i(S,T)) = 1$ if, and only if, $\root(S)=1$ and
  $i=1$.  Accordingly, the proof of \eqref{dual-2} will be split into
  three cases:
  \begin{itemize}
  \item[(a)] $i=1$ and $\root(S)=1$;
  \item[(b)] $i=1$ and $\root(S)>1$;
  \item[(c)] $i>1$.
  \end{itemize}

  Case (a): By assumption, $\open(S)=1$; if also $\root(S)=1$, then
  $S$ must be of the form $S=\sigma(S_1,\dots,S_{\ell-1},\leaf)$ for
  some \btrees\ $S_1$, \dots, $S_{\ell-1}$, and thus
  \begin{align*}
    \nu_1(S,T)
    &= \nu_1(\sigma(S_1,\dots,S_{\ell-1},\leaf), T)\\
    &= \rho(S_{\ell-1},\dots,S_{1},T).
    &&\text{by Lemma~\ref{lemma:rho-nu}}
  \end{align*}
  Therefore,
  \begin{align*}
    g(\nu_1(S,T))
    &= \sigma(g(S_{\ell-1}),\dots,g(S_1),g(T))
    &&\text{by Definition~\ref{def:g}}\\
    &= \mu_1(\rho(g(S_1),\dots,g(S_{\ell-1}), \leaf),g(T))
    &&\text{by Definition~\ref{def:sigma}}\\
    &= \mu_1(g(\sigma(S_1,\dots,S_{\ell-1},\leaf)), g(T))
    &&\text{by~\eqref{dual-1}}\\
    &= \mu_1(g(S),g(T)).
  \end{align*}

  Case (b): Since $\root(S)>1$ there
  are \btrees\ $U$ and $V$, and an integer $j$, such that $\root(U)=1$,
  $V$ is nontrivial, and $S=\mu_j(U,V)$. By assumption $\open(S)=1$. Moreover, 
 item~\eqref{2d} from Lemma~\ref{lem:behaviour} implies that 
 $\open(U)=1$ and $j=1$; thus we can use
  Lemma~\ref{lemma:m1}. The proof now proceeds by structural induction
  (the base case is trivial):
  \begin{align*}
    g(\nu_1(S,T))
    &= g(\nu_1(\mu_1(U,V),T)) \\
    &= g(\mu_1(\nu_1(U,T),V))
    &&\text{by Lemma~\ref{lemma:m1}}\\
    &= \nu_1(g(\nu_1(U,T)),g(V))
    &&\text{by Definition~\ref{def:g}}\\
    &= \nu_1(\mu_1(g(U),g(T)),g(V))
    &&\text{by induction}\\
    &= \mu_1(\nu_1(g(U),g(V)),g(T))
    &&\text{by Lemma~\ref{lemma:m1}}\\
    &= \mu_1(g(\mu_1(U,V)),g(T))
    &&\text{by Definition~\ref{def:g}}\\
    &= \mu_1(g(S),g(T)).
  \end{align*}

  Case (c): If $i>1$, then $\root(T)>1$ and we can write
  $T=\mu_j(U,V)$ for some \btrees\ $U$ and $V$ with $\root(U)=1$ and
  $V$ nontrivial. We can now proceed by either using structural
  induction or induction on $i$, the base case $i=1$ being provided by
  cases (a) and (b) above:
  \begin{align*}
    g(\nu_i(S,T))
    &= g(\nu_i(S,\mu_j(U,V)))
    && \\
    &= g(\mu_{j+1}(U,\nu_{i-1}(S,V)))
    &&\text{by Lemma~\ref{lemma:nu-mu}}\\
    &= \nu_{j+1}(g(U),g(\nu_{i-1}(S,V)))
    &&\text{by Definition~\ref{def:g}}\\
    &= \nu_{j+1}(g(U),\mu_{i-1}(g(S),g(V)))
    &&\text{by induction}\\
    &= \mu_{i}(g(S),\nu_{j}(g(U),g(V)))
    &&\text{by Lemma~\ref{lemma:nu-mu}}\\
    &= \mu_{i}(g(S), g(\mu_{j}(U,V)))
    &&\text{by Definition~\ref{def:g}}\\
    &= \mu_{i}(g(S), g(T))
  \end{align*}
  which concludes the proof.
\end{proof}

\begin{theorem}\label{thm:g-is-an-involution}
  The map $g$ is an involution.
\end{theorem}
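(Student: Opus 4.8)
The plan is to prove $g(g(T))=T$ for every \btree\ $T$ by induction on the number of nodes, playing the $(\rho,\mu_i)$ decomposition off against Lemma~\ref{lemma:dual}. The base case is the trivial tree: $g(g(\leaf)) = g(\leaf) = \leaf$ by Definition~\ref{def:g}\eqref{g:leaf}. For the inductive step, recall that a nontrivial \btree\ $T$ is either of the form $\rho(T_1,\dots,T_k)$ (when $\root(T)=1$) or of the form $\mu_i(S,T')$ with $\root(S)=1$ and $T'$ nontrivial (when $\root(T)>1$), and that in either case every component ($T_1,\dots,T_k$, or else $S$ and $T'$) has strictly fewer nodes than $T$, so the induction hypothesis applies to each of them.

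In the first case I would compute
\[
  g(g(\rho(T_1,\dots,T_k))) = g(\sigma(g(T_1),\dots,g(T_k))) = \rho(g(g(T_1)),\dots,g(g(T_k))) = \rho(T_1,\dots,T_k),
\]
where the first equality is Definition~\ref{def:g}\eqref{g:rho}, the second is Lemma~\ref{lemma:dual}\eqref{dual-1} (which holds for any list of \btrees), and the last is the induction hypothesis applied termwise. In the second case,
\[
  g(g(\mu_i(S,T'))) = g(\nu_i(g(S),g(T'))) = \mu_i(g(g(S)),g(g(T'))) = \mu_i(S,T'),
\]
using Definition~\ref{def:g}\eqref{g:mu}, then Lemma~\ref{lemma:dual}\eqref{dual-2}, then the induction hypothesis applied to $S$ and $T'$. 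This reproduces the two schematic pictures: the $(\sigma,\nu_i)$ parse tree of $T$ is the $(\rho,\mu_i)$ parse tree of $g(T)$, and $g$ simply swaps the two rôles, so it is its own inverse.

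The one point that genuinely needs checking is that Lemma~\ref{lemma:dual}\eqref{dual-2} may only be invoked once we know $\open(g(S))=1$; this holds for exactly the reason recorded after Definition~\ref{def:g}, namely that $\root(S)=1$ forces $S=\rho(S_1,\dots,S_\ell)$, hence $g(S)=\sigma(g(S_1),\dots,g(S_\ell))$, and every tree in the image of $\sigma$ has a single open node. (One also notes in passing that $g$ sends the trivial tree to itself and nontrivial trees to nontrivial trees, so all the decompositions above are legitimate.) Beyond this bookkeeping there is no real obstacle: the substantive work — the commutation identities of Lemmas~\ref{lemma:rho-nu}--\ref{lemma:m1} and their consequence Lemma~\ref{lemma:dual} — is already done, and what remains is the two-line induction above.
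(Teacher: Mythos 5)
Your proof is correct and follows essentially the same route as the paper's: induction on size, using Definition~\ref{def:g} to unfold $g$, Lemma~\ref{lemma:dual} to fold back, and the induction hypothesis on the components. The extra remark about $\open(g(S))=1$ is a valid (and welcome) bit of bookkeeping that the paper records after Definition~\ref{def:g}.
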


\begin{proof}
  We use induction on size. The base case $g^2(\leaf)=\leaf$ is
  trivial. For the induction step we have
  \begin{align*}
    g^2(\rho(T_1,\dots,T_k))
    &= g(\sigma(g(T_1),\dots,g(T_k)))
    &&\text{by Definition~\ref{def:g}}\\
    &= \rho(g^2(T_1),\dots,g^2(T_k))
    &&\text{by Lemma~\ref{lemma:dual}}\\
    &= \rho(T_1,\dots,T_k)
    &&\text{by induction}\\
    \intertext{and}
    g^2(\mu_i(S,T))
    &= g(\nu_i(g(S), g(T)))
    &&\text{by Definition~\ref{def:g}}\\
    &= \mu_i(g^2(S), g^2(T))
    &&\text{by Lemma~\ref{lemma:dual}}\\
    &= \mu_i(S, T)
    &&\text{by induction}
  \end{align*}
  which concludes the proof.
\end{proof}

\begin{theorem}\label{thm:eqd}
  On $\beta(0,1)$-trees with $n$ nodes, the pair of statistics $(\root,\open)$ has
  the same joint distribution as the pair
  $(\open,\root)$. Equivalently,
  $$\sum_{T}x^{\root(T)}y^{\open(T)} = \sum_{T}x^{\open(T)}y^{\root(T)},
  $$
  where the sum is over all \btrees\ with $n$ nodes.
\end{theorem}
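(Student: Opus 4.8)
The plan is to show that the involution $g$ of Definition~\ref{def:g} exchanges the two statistics, namely that $\root(g(T))=\open(T)$ and $\open(g(T))=\root(T)$ for every \btree\ $T$. Granting this, Theorem~\ref{thm:eqd} is immediate: by Theorem~\ref{thm:g-is-an-involution} the map $g$ is a bijection from \btrees\ onto \btrees, and (as noted below) it preserves the number of nodes, so the substitution $T\mapsto g(T)$ in the generating sum gives
$\sum_T x^{\root(T)}y^{\open(T)}=\sum_T x^{\root(g(T))}y^{\open(g(T))}=\sum_T x^{\open(T)}y^{\root(T)}$,
all sums being over \btrees\ with $n$ nodes. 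So the real work has already been done in proving Lemma~\ref{lemma:dual} and Theorem~\ref{thm:g-is-an-involution}; what remains is a short bookkeeping argument.

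The preservation of size is a routine preliminary: directly from the definitions one checks $|\rho(T_1,\dots,T_k)|=1+\sum_i|T_i|=|\sigma(T_1,\dots,T_k)|$ and $|\mu_i(S,T)|=|S|+|T|-1=|\nu_i(S,T)|$ (gluing a root onto another root, respectively a root onto a leaf, identifies exactly one pair of nodes), so induction on the $(\rho,\mu_i)$ decomposition together with Definition~\ref{def:g} yields $|g(T)|=|T|$; equivalently, $g$ preserves the weight of the associated bicolored tree.

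The main point, that $g$ swaps $\root$ and $\open$, I would prove by structural induction along the $(\rho,\mu_i)$ decomposition. The base case $g(\leaf)=\leaf$ is trivial, both statistics being $0$. For the inductive step there are two cases. If $\root(T)=1$ then $T=\rho(T_1,\dots,T_k)$ and $g(T)=\sigma(g(T_1),\dots,g(T_k))$ by Definition~\ref{def:g}; using $\root(\rho(\cdots))=1$, identities~\eqref{1b} and~\eqref{2a}, the fact that $\open(\sigma(\cdots))=1$, and the inductive hypothesis $\root(g(T_k))=\open(T_k)$, we get $\root(g(T))=1+\root(g(T_k))=1+\open(T_k)=\open(T)$ and $\open(g(T))=1=\root(T)$. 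If $\root(T)>1$ then $T=\mu_i(S,T')$ with $\root(S)=1$ and $T'$ nontrivial, so $g(T)=\nu_i(g(S),g(T'))$, where $\open(g(S))=1$ by the remark following Definition~\ref{def:g}; using identities~\eqref{rm}, \eqref{2d}, \eqref{1c} and~\eqref{on} of Lemma~\ref{lem:behaviour} and the inductive hypotheses $\root(g(S))=\open(S)$ and $\open(g(T'))=\root(T')$, we obtain $\root(g(T))=i-1+\root(g(S))=i-1+\open(S)=\open(T)$ and $\open(g(T))=1+\open(g(T'))=1+\root(T')=\root(T)$. This closes the induction.

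I do not expect a genuine obstacle here; the whole argument is bookkeeping built on Lemma~\ref{lem:behaviour}. The only points that require care are the well-definedness facts already recorded in Section~\ref{sec:new}: that root label $1$ forces the $\rho$-form and a larger root label forces the $\mu_i$-form, and that every tree fed to $\nu_i$ in the recursion satisfies the hypothesis $\open=1$, so that each invocation of Definition~\ref{def:g} and of Lemma~\ref{lem:behaviour} is legitimate. If one preferred to avoid the structural induction on \btrees\ directly, the identical computation can be carried out on the associated bicolored trees, where $g$ merely reinterprets a black node as $\rho$ rather than $\sigma$ (and a white $i$-node as $\mu_i$ rather than $\nu_i$) and the identities of Lemma~\ref{lem:behaviour} become defining recursions; the direct version above is shorter.
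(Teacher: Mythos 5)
Your proof is correct and follows essentially the same route as the paper: structural induction along the $(\rho,\mu_i)$ decomposition using Definition~\ref{def:g} and Lemma~\ref{lem:behaviour}, with the involution $g$ acting as the statistic-swapping bijection. The only (harmless) difference is bookkeeping: you carry both identities $\root(g(T))=\open(T)$ and $\open(g(T))=\root(T)$ through the induction, whereas the paper proves only the first by induction and deduces the second from Theorem~\ref{thm:g-is-an-involution}.
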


\begin{proof}
  Using induction we shall now prove that $\root(g(U)) = \open(U)$ for
  each \btree\ $U$.  The base case is plain. For the induction step,
  assume that $T_1$, \dots, $T_k$, $S$ and $T$ are \btrees,
  $\root(S)=1$, and that $T$ is nontrivial. We have
  \begin{align*}
    \root(g(\rho(T_1,\dots,T_k)))
    &= \root(\sigma(g(T_1),\dots,g(T_k)))
    && \text{by Definition~\ref{def:g}}\\
    &= 1 + \root(g(T_k))
    && \text{by \eqref{2a} from Lemma~\ref{lem:behaviour}}\\
    &= 1 + \open(T_k)
    && \text{by induction}\\
    &= \open(\rho(T_1,\dots,T_k)).
    && \text{by \eqref{1b} from Lemma~\ref{lem:behaviour}}
    \intertext{Also,}
    \root(g(\mu_i(S,T)))
    &= \root(\nu_i(g(S),g(T)))
    && \text{by Definition~\ref{def:g}}\\
    &= i - 1 + \root(g(S))
    && \text{by \eqref{1c} from Lemma~\ref{lem:behaviour}}\\
    &= i - 1 + \open(S)
    && \text{by induction}\\
    &= \open(\mu_i(S,T)).
    && \text{by \eqref{2d} from Lemma~\ref{lem:behaviour}}
  \end{align*}
  Since $g$ is an involution it follows that $\open(g(T)) =
  \root(T)$ as well, which concludes the proof.
\end{proof}

\begin{corollary}\label{corollary11}
  On $\beta(0,1)$-trees with $n$ nodes, the pair of statistics $(\root,\rmod)$ has
  the same joint distribution as the pair
  $(\rmod,\root)$. Equivalently,
  $$\sum_{T}x^{\root(T)}y^{\rmod(T)} = \sum_{T}x^{\rmod(T)}y^{\root(T)},
  $$
  where both sums are over all \btrees\ with $n$ nodes.
\end{corollary}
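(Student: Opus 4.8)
The plan is to deduce Corollary~\ref{corollary11} directly from Theorem~\ref{thm:eqd} by replacing the statistic $\open$ with $\rmod$. The only ingredient needed beyond Theorem~\ref{thm:eqd} is Lemma~\ref{lemma:rmod-open}, which asserts the pointwise identity $\rmod(T) = \open(T)$ for every \btree\ $T$ (not merely equidistribution, but equality on each individual tree).

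Concretely, I would argue as follows. Theorem~\ref{thm:eqd} supplies the identity
$$\sum_{T}x^{\root(T)}y^{\open(T)} = \sum_{T}x^{\open(T)}y^{\root(T)},$$
with both sums ranging over all \btrees\ on $n$ nodes. Since Lemma~\ref{lemma:rmod-open} gives $\open(T)=\rmod(T)$ for each $T$ appearing in either sum, we may substitute $\rmod(T)$ for $\open(T)$ termwise on both sides without changing the value of any summand. This substitution turns the left-hand side into $\sum_{T}x^{\root(T)}y^{\rmod(T)}$ and the right-hand side into $\sum_{T}x^{\rmod(T)}y^{\root(T)}$, which is exactly the asserted equality. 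Equivalently, one may phrase this as saying that the bijection $g$ from Theorem~\ref{thm:g-is-an-involution}, which satisfies $\root(g(T))=\open(T)$ and $\open(g(T))=\root(T)$, also satisfies $\root(g(T))=\rmod(T)$ and $\rmod(g(T))=\root(T)$ by Lemma~\ref{lemma:rmod-open}, so it exchanges the pair $(\root,\rmod)$ with $(\rmod,\root)$ on \btrees.

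There is essentially no obstacle here: the corollary is an immediate translation of Theorem~\ref{thm:eqd} under the pointwise coincidence of $\open$ and $\rmod$ established in Lemma~\ref{lemma:rmod-open}. The substantive work has already been done in proving that $g$ is an involution respecting $(\root,\open)$; this corollary simply records the consequence in terms of the statistic $\rmod$, which is the one that matches the map statistic $\rmodM$ via Proposition~\ref{prop:exc} and is therefore the form needed to derive Theorem~\ref{thm} combinatorially.
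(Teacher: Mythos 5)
Your proof is correct and takes essentially the same route as the paper, which likewise deduces the corollary directly from Theorem~\ref{thm:eqd} combined with the pointwise identity $\rmod(T)=\open(T)$ of Lemma~\ref{lemma:rmod-open}. The termwise substitution argument you give is exactly the intended justification.
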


\begin{proof}
  This is a direct consequence of Lemma~\ref{lemma:rmod-open} and
  Theorem~\ref{thm:eqd}.
\end{proof}

Our second proof of Theorem \ref{thm} now follows from Corollary
\ref{corollary11} through the correspondence between bicubic maps and
\btrees.

\begin{definition}
  Let $C_n=\binom{2n}{n}/(n+1)$ denote the $n$th Catalan
  number. Define
  $$a(n) = 2^{n-1}C_n.
  $$
  This is sequence A003645 in OEIS~\cite{oeis}.
\end{definition}

By computing the number of trees fixed by $g$, for $n\leq 12$, we
arrive at the following conjecture.
 
\begin{conjecture}\label{conj:fp}
  For $n>1$, the number of \btrees\ on $n$ nodes fixed under $g$ is
  $a(\floor{n/2})$. This sequence starts
  $1,1,4,4,20,20,112,112,672,672,4224,4224,\dots$
\end{conjecture}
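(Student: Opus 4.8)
The plan is to prove the conjectured formula by finding the number of \btrees\ fixed under $g$ exactly, rather than merely verifying it for small $n$. Since $g$ acts by swapping the roles of the two parse-tree decompositions ($(\rho,\mu_i)$ versus $(\sigma,\nu_i)$) of a \btree, a fixed point is a \btree\ whose $(\rho,\mu_i)$-parse tree and $(\sigma,\nu_i)$-parse tree coincide as labeled plane trees. Equivalently, recalling the bicolored tree picture from Section~5, a fixed point corresponds to a tree in $\T$ that is invariant under the structural operation induced by $g$ on $\T$. The first step would be to unwind Definition~\ref{def:g} together with Lemma~\ref{lemma:dual} to obtain a clean combinatorial description: $g(T)=T$ if and only if, whenever $T=\rho(T_1,\dots,T_k)$, we have $\sigma(g(T_1),\dots,g(T_k))=\rho(T_1,\dots,T_k)$ (and the analogous condition in the $\mu_i$ case), which by induction forces each $g(T_j)=T_j$ plus a matching condition relating $\rho$ and $\sigma$ applied to the same fixed subtrees.

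Next I would translate this into a generating-function equation. Let $H=H(t)=\sum_{T:\,g(T)=T} t^{|T|}$ be the generating function for fixed points by number of nodes. Using the recursive characterization --- a fixed point is either trivial, or of ``$\rho$-type'' built from a list of smaller fixed points subject to the $\rho=\sigma$ compatibility constraint, or of ``$\mu_i$-type'' subject to the corresponding constraint --- one should be able to write a functional equation for $H$. The key simplification to exploit is that $\sigma(T_1,\dots,T_k)=\mu_1(\rho(T_{k-1},\dots,T_1,\leaf),T_k)$, so the constraint ``$\rho$ of a list equals $\sigma$ of the same list of fixed points'' is very rigid: it essentially pins down the list structure, which is why the count grows only like $a(\floor{n/2})$ rather than exponentially faster. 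I expect the equation to be of the quadratic/Catalan type in the variable $t^2$, consistent with the fact that $a(n)=2^{n-1}C_n$ has generating function $\sum_n 2^{n-1}C_n z^n = (1-\sqrt{1-8z})/(4z) - \tfrac12$ or a close variant, and that the answer should end up as (roughly) $H(t)=1+$ (something in $t^2$) matching $\sum_{n>1} a(\floor{n/2}) t^n = (1+t)\sum_{m\ge1} a(m) t^{2m}$, which explains the pairing $1,1,4,4,20,20,\dots$.

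The main obstacle will be pinning down exactly which lists $(T_1,\dots,T_k)$ of fixed subtrees satisfy the $\rho=\sigma$ (respectively $\mu_i=\nu_i$) compatibility condition, i.e.\ correctly characterizing the ``local'' structure of a fixed point at each node of its parse tree. This requires a careful case analysis of how $\rho$ and $\sigma$ reshape a list --- note $\sigma$ reverses the list, appends a leaf, and grafts via $\mu_1$, so equality with $\rho$ is a genuine constraint, not automatic --- and likewise understanding when $\mu_i(S,T)=\nu_i(S,T)$ for fixed $S,T$. Once that local characterization is in hand, assembling the global generating function and extracting coefficients should be routine, and comparison with the closed form for $a(\floor{n/2})$ (together with the observed data through $n=12$) would complete the proof. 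A secondary concern is bookkeeping the node-count shifts introduced by the auxiliary leaves in $\sigma$ and by $\lambda_0$ inside $\rho$, since these are exactly what produce the doubling $n\mapsto \floor{n/2}$; getting these shifts right is essential for the formula to come out as $a(\floor{n/2})$ rather than $a(n)$ or $a(\lceil n/2\rceil)$.
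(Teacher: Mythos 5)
There is no proof to compare against here: the statement you are addressing is an open conjecture in the paper. The authors arrive at it by computing fixed points of $g$ for $n\le 12$, and they explicitly state that they were unable to prove it (the techniques used for the analogous involution $h$ on $\beta(1,0)$-trees did not transfer). Your text is likewise not a proof but a programme, and its central step is exactly the part you defer: you never characterize which parse-tree configurations are fixed by $g$, never derive the functional equation for $H(t)$, and never extract coefficients; "one should be able to write a functional equation" and "the main obstacle will be pinning down exactly which lists satisfy the compatibility condition" name precisely the difficulty that is open, so nothing beyond the paper's own computational evidence is established. (Verifying agreement with data through $n=12$ cannot "complete the proof.")

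Moreover, the one concrete structural claim you do make is wrong. You assert that $g(T)=T$ with $T=\rho(T_1,\dots,T_k)$ "forces each $g(T_j)=T_j$ plus a matching condition." It does not. Since $g(\rho(T_1,\dots,T_k))=\sigma(g(T_1),\dots,g(T_k))$ and $\root(\sigma(U_1,\dots,U_k))=1+\root(U_k)$ while $\root(T)=1$, fixedness first forces $\root(g(T_k))=1+\open(T_k)=1$, i.e.\ $T_k=\leaf$; then, using $\sigma(U_1,\dots,U_{k-1},\leaf)=\mu_1(\rho(U_{k-2},\dots,U_1,\leaf),\leaf)$-type identities, the equality $\sigma(g(T_1),\dots,g(T_{k-1}),\leaf)=\rho(T_1,\dots,T_{k-1},\leaf)$ matches the components in \emph{reversed} order, yielding conditions of the shape $g(T_j)=T_{k-j}$ rather than $g(T_j)=T_j$. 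So fixed points are built from $g$-paired (not $g$-fixed) subtrees, and an induction on "all components are fixed" cannot even get started; a correct local analysis of this pairing, in both the $\rho/\sigma$ and $\mu_i/\nu_i$ cases, is the missing idea. Your heuristic remarks about the Catalan-type generating function and the parity pairing $1,1,4,4,20,20,\dots$ are consistent with the conjectured answer, but they rest on this unproved (and, as stated, incorrect) characterization. A minor further slip: $\sum_{n\ge1}2^{n-1}C_nz^n=\bigl(1-\sqrt{1-8z}\bigr)/(8z)-\tfrac12$, not $\bigl(1-\sqrt{1-8z}\bigr)/(4z)-\tfrac12$.
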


The number of fixed points under the involution $h$ on
$\beta(1,0)$-trees (introduced in \cite{CKS,CKS0}) was found in
\cite{KM}. These numbers also count {\em self-dual rooted
  non-separable planar maps} \cite{KMN}. However, we were not able to
exploit the ideas to count fixed points under $h$ in order to prove
Conjecture \ref{conj:fp}, because the involution $g$ is more complex,
and in general, $\beta(0,1)$-trees are more complex than
$\beta(1,0)$-trees.

\begin{proposition}[Tutte, Koganov, Liskovets and Walsh]
\label{prop:TKLW}
  The number of bicubic maps on $2n$ vertices with one distinguished
  $1$-colored face is $a(n)$.
\end{proposition}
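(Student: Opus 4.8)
The plan is to obtain the count as a short consequence of Tutte's enumeration of bicubic maps together with the colour-cycling transformation $\phi$ from the introduction, rather than reproving it from scratch. Write $\mathcal{B}_n$ for the set of bicubic maps on $2n$ vertices and, for $M\in\mathcal{B}_n$, put $f_i(M)=|\F_i(M)|$ for $i=1,2,3$. Since a bicubic map ``with one distinguished $1$-coloured face'' is precisely a pair $(M,R)$ with $M\in\mathcal{B}_n$ and $R\in\F_1(M)$, the number we want is exactly $\sum_{M\in\mathcal{B}_n} f_1(M)$, and the whole argument reduces to evaluating this sum.

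First I would record two elementary facts. Every $M\in\mathcal{B}_n$ has $3n$ edges (the degree sum is $3\cdot 2n$), so Euler's formula gives $F=2-2n+3n=n+2$ faces; hence $f_1(M)+f_2(M)+f_3(M)=n+2$ for all $M\in\mathcal{B}_n$. Second, the transformation $\phi$ keeps the underlying embedded graph (it only re-roots and recolours), so it restricts to a bijection of $\mathcal{B}_n$ onto itself (indeed $\phi^3=\mathrm{id}$), and by construction it sends the face-class $\F_1(M)$ to $\F_2(\phi(M))$, the class $\F_2(M)$ to $\F_3(\phi(M))$, and $\F_3(M)$ to $\F_1(\phi(M))$. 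In particular $f_1(M)=f_2(\phi(M))$; summing over $M\in\mathcal{B}_n$ and reindexing via $M'=\phi(M)$ gives $\sum_M f_1(M)=\sum_{M'} f_2(M')$, and likewise $\sum_M f_2(M)=\sum_M f_3(M)$. Thus the three sums $\sum_M f_1$, $\sum_M f_2$ and $\sum_M f_3$ over $\mathcal{B}_n$ are all equal.

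Adding these three equal sums and applying $f_1+f_2+f_3=n+2$ yields
$$3\sum_{M\in\mathcal{B}_n} f_1(M)=\sum_{M\in\mathcal{B}_n}\bigl(f_1(M)+f_2(M)+f_3(M)\bigr)=(n+2)\,|\mathcal{B}_n|.$$
Plugging in Tutte's formula $|\mathcal{B}_n|=\tfrac{3\cdot 2^{n-1}(2n)!}{n!\,(n+2)!}$ and simplifying the routine factorial expression gives
$$\sum_{M\in\mathcal{B}_n} f_1(M)=\frac{n+2}{3}\,|\mathcal{B}_n|=\frac{2^{n-1}(2n)!}{n!\,(n+1)!}=\frac{2^{n-1}}{n+1}\binom{2n}{n}=2^{n-1}C_n=a(n),$$
which is the assertion of the proposition.

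As for difficulties: essentially all the content already lives in the introduction, so there is no real obstacle; the one point deserving an explicit line is the verification that $\phi$ genuinely permutes the three face-colour classes cyclically and is a bijection of $\mathcal{B}_n$ — both built into its definition and the remark that $\phi^3$ is the identity. A check of the first values is reassuring: $a(1)=1$ matches the unique $2$-vertex bicubic map with its single $1$-coloured face, and $a(2)=4$ matches the three $4$-vertex maps of Figure~\ref{bicubic}, whose numbers of $1$-coloured faces are $1$, $1$ and $2$. If one preferred to avoid even this, one could instead simply invoke the classical computations of Tutte and of Koganov--Liskovets--Walsh, but the argument above keeps the paper self-contained.
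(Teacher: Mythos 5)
Your argument is correct, but it is a genuinely different route from the paper's. The paper proves the proposition by citation and translation: it invokes Koganov--Liskovets--Walsh's result that rooted eulerian planar maps with $n$ edges and a distinguished vertex number $a(n)$, and then transports that count through Tutte's trinity mapping, under which eulerian maps with $n$ edges correspond to bicubic maps with $2n$ vertices and vertices correspond to $1$-colored faces. You instead do a self-contained double count: the objects are pairs $(M,R)$ with $R\in\F_1(M)$, so the count is $\sum_M|\F_1(M)|$; Euler's formula gives $|\F_1(M)|+|\F_2(M)|+|\F_3(M)|=n+2$ for every bicubic map on $2n$ vertices, the recoloring bijection $\phi$ (with $\phi^3=\mathrm{id}$) shows the three sums $\sum_M|\F_i(M)|$ are equal, and Tutte's formula for the number of bicubic maps then yields $\tfrac{n+2}{3}\cdot\tfrac{3\cdot 2^{n-1}(2n)!}{n!(n+2)!}=2^{n-1}C_n=a(n)$. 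Your computation checks out (including the small-$n$ sanity checks), and it buys self-containedness, using only facts already stated in the introduction rather than two external results; what it gives up is the structural content of the paper's proof, namely the explicit correspondence between distinguished $1$-colored faces and distinguished vertices of eulerian maps, which is the viewpoint that motivates the later bijective reformulation of Conjecture~\ref{conj:fp}. The only point worth stating explicitly in your write-up is the one you already flag: that $\phi$ maps the class of bicubic maps on $2n$ vertices (with the stated rooting and coloring conventions) onto itself and permutes the color classes cyclically, which is exactly the paper's assertion that $\phi$ is well defined with $\phi^3$ the identity.
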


\begin{proof}
  Koganov, Liskovets and Walsh \cite[Proposition 3.1]{KLW} showed that
  the number of rooted eulerian planar maps with $n$ edges and a
  distinguished vertex is given by the formula $a(n)$. Tutte's well-known
``trinity mapping'' sends eulerian planar maps with $n$ edges
  to bicubic maps with $2n$ vertices. It is easy to see that under the
  same mapping vertices are sent to $1$-colored faces.
\end{proof}

\begin{proposition}
  The number of \btrees\ on $n+1$ nodes with one distinguished
  excessive node is $a(n)$.
\end{proposition}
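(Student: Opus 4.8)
The plan is to derive this immediately from Proposition~\ref{prop:TKLW} and Proposition~\ref{prop:exc}, using the bijection $\psi$ between bicubic maps and \btrees\ from Section~\ref{sec:maps-trees}. The key point is that under $\psi$ the $1$-colored faces of a map $M$ correspond to the excessive nodes of $\psi(M)$: Proposition~\ref{prop:exc} asserts precisely that $\exc(\psi(M)) = \one(M) = |\F_1(M)|$.

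First I would record the size correspondence, namely that $\psi$ restricts to a bijection from bicubic maps on $2n$ vertices to \btrees\ on $n+1$ nodes. This follows by induction on the three construction rules of Section~\ref{sec:maps-trees}: in the base case the bicubic map with two vertices (and three edges) is matched with the tree having two nodes; operations (1) and (2) add two vertices to the map, while the corresponding tree operation $\lambda_i$ adds exactly one new (root) node; and in operation (3) the vertex counts of the irreducible pieces $M_1,\dots,M_k$ sum to that of the assembled map, while $\oplus$ produces a tree whose node count equals the sum of the node counts of the pieces, minus $k-1$ (because roots are identified).

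Granting this, Proposition~\ref{prop:exc} gives $\exc(\psi(M)) = |\F_1(M)|$, so selecting one of the $1$-colored faces of $M$ is the same as selecting one of the excessive nodes of $\psi(M)$. Hence $\psi$ upgrades to a bijection between bicubic maps on $2n$ vertices carrying one distinguished $1$-colored face and \btrees\ on $n+1$ nodes carrying one distinguished excessive node. By Proposition~\ref{prop:TKLW} the former family has cardinality $a(n)$, and therefore so does the latter, which is the claim.

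The only place requiring a little care is the size bookkeeping in the inductive step for operation (3): one must check that if $M_1,\dots,M_k$ have $2a_1,\dots,2a_k$ vertices with $\sum_i 2a_i = 2n$ and, by the inductive hypothesis, $\psi(M_i)$ has $a_i+1$ nodes, then $\psi(M_1)\oplus\cdots\oplus\psi(M_k)$ has $\sum_i(a_i+1) - (k-1) = n+1$ nodes. This computation is elementary, but it is exactly where an off-by-one slip would most easily occur, so I would spell it out explicitly; everything else is a direct transfer along the already-established bijections.
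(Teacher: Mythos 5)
Your argument is correct and is essentially the paper's own proof: the proposition is deduced directly from Propositions~\ref{prop:exc} and~\ref{prop:TKLW} via the bijection $\psi$. The only addition is your explicit inductive check that maps on $2n$ vertices correspond to trees on $n+1$ nodes, a size bookkeeping the paper leaves implicit, and your computation of it (including the $\oplus$ case) is right.
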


\begin{proof}
  This is a direct consequence of Propositions~\ref{prop:exc} and \ref{prop:TKLW}.
\end{proof}

In light of this last proposition we can reformulate
Conjecture~\ref{conj:fp} as follows.

\begin{conjecture}
  There is a bijection between \btrees\ on $n$ nodes fixed under $g$
  and \btrees\ on $\floor{n/2}+1$ nodes with one distinguished
  excessive node.
\end{conjecture}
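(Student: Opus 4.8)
\medskip
\noindent\textbf{A strategy for attacking the conjecture.}
The plan is to obtain a recursive description of the \btrees\ fixed by $g$ and then either convert it into a generating function identity or read off an explicit ``size\nobreakdash-halving'' bijection. The first remark is that the proof of Theorem~\ref{thm:eqd} gives $\root(g(U))=\open(U)$ for every \btree\ $U$, so any fixed point $T$ satisfies $\root(T)=\open(T)$, and hence (by Lemma~\ref{lemma:rmod-open}) also $\root(T)=\rmod(T)$; in particular $\root(T)=1$ if and only if $\open(T)=1$. It is then natural to split the fixed points according to whether $\root(T)=1$ or $\root(T)>1$, which mirrors the $(\rho,\mu_i)$ decomposition.

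First I would dispose of the case $\root(T)=1$, which is transparent. Combining Definitions~\ref{def:sigma} and~\ref{def:g} with the convention $\mu_1(S,\leaf)=S$ yields the identity
$$
g\bigl(\rho(T_1,\dots,T_\ell,\leaf)\bigr)=\rho\bigl(g(T_\ell),\dots,g(T_1),\leaf\bigr),
$$
and, writing $T=\rho(T_1,\dots,T_k)$, the equation $g(T)=T$ together with $\root(T)=1$ forces $T_k=\leaf$ (since $\root(\sigma(\,\cdot\,))=1+\root(g(T_k))$ by~\eqref{2a} of Lemma~\ref{lem:behaviour}). By uniqueness of the $\oplus$-decomposition, the identity above then shows that the fixed points with $\root(T)=1$ are precisely the trees $\rho(T_1,\dots,T_\ell,\leaf)$ with $g(T_j)=T_{\ell+1-j}$ for all $j$, that is, a $g$-palindromic list followed by a trailing leaf. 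When $\ell=2p$ the free data is the arbitrary list $(T_1,\dots,T_p)$; since $g$ preserves the number of nodes, $\rho(T_1,\dots,T_{2p},\leaf)$ has $n$ nodes exactly when $\rho(T_1,\dots,T_p,\leaf)$ has $\floor{n/2}+1$ nodes, and as the root of the latter (labeled $1$) is excessive, this suggests the partial map $\rho(T_1,\dots,T_{2p},\leaf)\mapsto(\rho(T_1,\dots,T_p,\leaf),\text{root})$ into marked-excessive-node trees. When $\ell=2p+1$ the middle tree $T_{p+1}$ is itself fixed by $g$, so one recurses on it --- taking care that embedding the recursively obtained marked tree via $\lambda_0$ relabels its root to $0$ and may therefore spoil a marked root, a situation to be repaired by marking the new $\lambda_0$-root instead.

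The case $\root(T)>1$ carries the real content. Here $T=\mu_i(S,T')$ with $\root(S)=1$ and $T'$ nontrivial, and $g(T)=T$ reads $\nu_i(g(S),g(T'))=\mu_i(S,T')$. I would use Lemmas~\ref{lemma:nu-mu}, \ref{lemma:m1} and~\ref{lemma:dual} to commute the $\mu$'s past the $\nu$'s and, by analogy with the root-one case, expect this equation to collapse to a palindromic symmetry on the pieces of $T$ together with a smaller fixed point. Assembling the three families --- even $\ell$ and odd $\ell$ with $\root=1$, and $\root>1$ --- should then account for all \btrees\ on $\floor{n/2}+1$ nodes with a distinguished excessive node; from such a description one extracts a functional equation for $\Pi(t)=\sum_{g(T)=T}t^{|T|}$ and compares it with
$$
\sum_{m\ge 1}a(m)\,t^{m}=\frac{1-4t-\sqrt{1-8t}}{8t},
$$
which by Propositions~\ref{prop:exc} and~\ref{prop:TKLW} counts \btrees\ on $m+1$ nodes with a distinguished excessive node. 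Agreement of the two series would prove Conjecture~\ref{conj:fp}, while the explicit recursion would give the bijection outright.

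\textbf{The main obstacle.} Everything beyond the $\root(T)=1$ case is delicate. Unlike the clean identity for $\rho$, the equation $\nu_i(g(S),g(T'))=\mu_i(S,T')$ equates a $\nu$-form with a $\mu$-form, and making it explicit requires controlling precisely how the two dual decompositions interact on a self-conjugate tree --- in particular how the ``$+1$ along the rightmost path'' of $\mu_i$ plays against the ``$+(i-1)$ along the rightmost path'' of $\nu_i$. This is exactly the extra layer (labeled binary operations constrained by inequalities) that is absent for $\beta(1,0)$-trees, where the analogous fixed-point count was carried out in~\cite{KM}, and it is what makes the present conjecture resist the same treatment. An alternative worth attempting is to transport $g$ through the bicolored-tree picture: writing $\Phi$ and $\Psi$ for the parse-tree maps from \btrees\ to $\T$ given by the $(\sigma,\nu_i)$- and $(\rho,\mu_i)$-decompositions, one has $g=\Psi^{-1}\Phi$, so $\gamma:=\Phi\Psi^{-1}$ is an involution on $\T$ whose fixed points correspond to those of $g$ --- equivalently, the fixed points of $g$ are the \btrees\ whose two parse trees coincide. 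A sufficiently explicit combinatorial description of $\gamma$ on bicolored trees, should one exist, would reduce the conjecture to matching a class of symmetric bicolored trees with the marked-excessive-node \btrees; obtaining such a description of $\gamma$ is then the crux on that route.
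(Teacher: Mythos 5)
First, a point of calibration: the statement you are addressing is not proved in the paper at all. It is a conjecture (the restatement of Conjecture~\ref{conj:fp} obtained via Propositions~\ref{prop:exc} and~\ref{prop:TKLW}), and the authors explicitly say they were unable to prove it, the fixed points having been computed only up to $n\leq 12$. So there is no proof of the paper's to compare yours against; the only question is whether your proposal settles the statement, and by your own account it does not. What you do carry out is correct as far as it goes: from $\root(g(U))=\open(U)$ any fixed point has $\root=\open=\rmod$; for $\root(T)=1$, writing $T=\rho(T_1,\dots,T_k)$, equation~\eqref{2a} of Lemma~\ref{lem:behaviour} indeed forces $T_k=\leaf$, the identity $g(\rho(T_1,\dots,T_\ell,\leaf))=\rho(g(T_\ell),\dots,g(T_1),\leaf)$ does follow from Definitions~\ref{def:sigma} and~\ref{def:g} together with the convention $\mu_1(S,\leaf)=S$, and uniqueness of the $\oplus$-decomposition then characterizes these fixed points as $g$-palindromic lists with a trailing leaf; your size bookkeeping in the even-$\ell$ case is also right.

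The genuine gap is everything you flag yourself, and it is exactly where the conjecture lives. In the case $\root(T)>1$ the fixed-point condition reads $\nu_i(g(S),g(T'))=\mu_i(S,T')$, an equation between a $\nu$-form and a $\mu$-form, and you only say you ``expect'' it to collapse to a palindromic symmetry plus a smaller fixed point; no argument is given, and the available commutation tools (Lemmas~\ref{lemma:nu-mu} and~\ref{lemma:m1}) shift indices by one under side conditions such as $\open(R)=\root(S)=1$ that are not at your disposal for arbitrary $S$ and $T'$, so nothing shows the equation decouples. Even within the $\root(T)=1$ case the map to marked trees is only partial: the even-$\ell$ branch produces only fixed points of even size, the odd-$\ell$ branch needs a recursion whose ``repair'' of the marked root under $\lambda_0$ is asserted but not shown to be well defined or injective, and you never verify that the assembled families cover every \btree\ on $\floor{n/2}+1$ nodes with a distinguished excessive node exactly once (any complete argument must in particular explain why the fixed-point counts for sizes $n$ and $n+1$ agree, which your partial construction does not yet address). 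The reformulation via the involution $\gamma=\Phi\Psi^{-1}$ on bicolored trees is a sensible way to restate the problem --- fixed points of $g$ are the \btrees\ whose two parse trees coincide --- but the explicit description of $\gamma$ that this route requires is precisely the missing ingredient. In short, the proposal is a reasonable plan of attack with a correct first step, but the conjecture remains open after it, just as it is in the paper.
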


We close this paper by making an additional conjecture.

\begin{conjecture}\label{conj2}
  The two pairs of statistics $(\root,\rzero)$ and $(\rmod,\sub)$
  are jointly equidistributed on $\beta(0,1)$-trees.
\end{conjecture}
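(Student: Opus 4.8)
The plan is to prove Conjecture~\ref{conj2} by the generating-function method used for Theorem~\ref{thm} at the end of Section~\ref{sec:new}, with $\rzero$ and $\sub$ now in the roles played there by $\root$ and $\rmod$. The first step is to record how the statistics behave under the two decompositions. Under $(\rho,\mu_i)$: $\root(\rho(T_1,\dots,T_k))=1$ by definition of $\rho$, $\root(\mu_i(S,T))=1+\root(T)$ by Lemma~\ref{lem:behaviour}, $\rzero(\rho(T_1,\dots,T_k))=1+\rzero(T_k)$ (the rightmost path of $\rho(T_1,\dots,T_k)$ is its root, then a zero, then the rightmost path of $T_k$ below the root of $T_k$), and the key identity $\rzero(\mu_i(S,T))=\rzero(S)$, which holds because forming $\mu_i(S,T)$ raises by one every label on the rightmost path of $T$ from its root down to the attachment node, so that no zero survives on that stretch, while below the attachment node the rightmost path is the rightmost path of $S$, whose root is nonzero. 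Under $(\sigma,\nu_i)$, using $\rmod=\open$: $\rmod(\sigma(T_1,\dots,T_k))=1$ (every $\sigma$-image has exactly one open node), $\rmod(\nu_i(S,T))=1+\rmod(T)$ by Lemma~\ref{lem:behaviour}, and $\sub(\nu_i(S,T))=\sub(S)$, since the root of $\nu_i(S,T)$ is the root of $S$ and $\nu_i$ alters $S$ only along its rightmost path.

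Next I would set up two functional equations. Write $P(t,x,y)=\sum_T t^{|T|}x^{\root(T)}y^{\rzero(T)}$ and $Q(t,x,y)=\sum_T t^{|T|}x^{\rmod(T)}y^{\sub(T)}$, summed over all \btrees\ with $t$ marking the number of nodes. Decomposing a tree according to whether its leading statistic equals $1$ or exceeds $1$, the recursions above express $P$ through $\sum_T t^{|T|}$, $\sum_T t^{|T|}y^{\rzero(T)}$, $\sum_{\root(T)=1}t^{|T|}y^{\rzero(T)}$, and the multiplicity-weighted sum $\sum_T\open(T)\,x^{\root(T)}t^{|T|}$ (the factor $\open(T)$ coming from the range $1\le i\le\open(T)$ in the $\mu_i$ term), and express $Q$ through the analogous quantities with $\rzero$ replaced by $\sub$, with $\root(T)=1$ replaced by $\open(T)=1$, and with the last sum replaced by $\sum_T\root(T)\,x^{\rmod(T)}t^{|T|}$. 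These last two sums are equal: by Theorem~\ref{thm:eqd} the series $\sum_T x^{\root(T)}z^{\open(T)}t^{|T|}$ is symmetric in $x$ and $z$, so differentiating in $z$ at $z=1$ and using $\rmod=\open$ gives $\sum_T\open(T)x^{\root(T)}t^{|T|}=\sum_T\root(T)x^{\rmod(T)}t^{|T|}$. It would then remain to verify that $P$ and $Q$ satisfy the same equation; combined with $P(t,1,1)=Q(t,1,1)$, this forces $P=Q$, and setting $x=1$ in the two equations should simultaneously yield the marginal equidistribution of $\rzero$ and $\sub$ and the restricted equidistribution needed to identify $\sum_{\root(T)=1}(\cdots)$ with $\sum_{\open(T)=1}(\cdots)$.

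The step I expect to be the real obstacle is the behaviour of $\sub$ under $\sigma$, that is, actually closing the equation for $Q$. Unlike $\sub(\nu_i(S,T))=\sub(S)$, the value $\sub(\sigma(T_1,\dots,T_k))$ is not a clean function of $\sub(T_k)$ alone: it jumps according to whether the attachment node inside $T_k$ coincides with the root of $T_k$, equivalently according to whether the $\open=1$ tree in question has root label $1$ (so that the $\rho$-part gets absorbed into the root and contributes its children) or larger. One therefore likely has to carry an auxiliary marker---for the root label, or for whether the root is an open node---and check that the enlarged system still closes; this delicacy is presumably why the statement is only conjectured. As an alternative one could look for an explicit bijection $T\mapsto T'$ with $(\root(T),\rzero(T))=(\rmod(T'),\sub(T'))$, but the involution $g$ is not one: although $\root(g(T))=\rmod(T)$ always holds, applying $g$ to the tree of Figure~\ref{fig:tree} shows that $\rzero(g(T))\ne\sub(T)$ in general, so a genuinely new construction is needed. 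On the map side it is also worth testing whether the order-$3$ recolouring $\phi$, or a power of it transported through the bijection $\psi$, directly realizes the equidistribution on bicubic maps corresponding to Conjecture~\ref{conj2}.
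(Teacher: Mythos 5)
This statement is left as a conjecture in the paper --- the authors only report verifying it for \btrees{} on at most $11$ nodes --- so there is no proof of record to measure your argument against; the relevant question is whether your proposal actually closes the gap, and it does not (as you yourself acknowledge). The preliminary bookkeeping you do is correct: $\rzero(\rho(T_1,\dots,T_k))=1+\rzero(T_k)$ and $\rzero(\mu_i(S,T))=\rzero(S)$ hold for the reasons you give (the $+1$ shift kills all zeros on the stretch of the rightmost path of $T$ down to the attachment node, and the rest of the rightmost path lies in $S$ below its root, which has label $1$); likewise $\sub(\nu_i(S,T))=\sub(S)$, $\open(\nu_i(S,T))=1+\open(T)$, and $\open(\sigma(\cdots))=1$. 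Your identity $\sum_T\open(T)x^{\root(T)}t^{|T|}=\sum_T\root(T)x^{\rmod(T)}t^{|T|}$ via Theorem~\ref{thm:eqd} is also fine, and your observation that $g$ itself cannot serve as the bijection is confirmed by the appendix pair, where $\rzero$ takes values $1$ and $3$ while $\sub=4$ for both trees.

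The genuine gap is exactly the step you flag and then leave open: the $\sigma$-term of the equation for $Q$. Concretely, since $\sigma(T_1,\dots,T_k)=\mu_1(\rho(T_{k-1},\dots,T_1,\leaf),T_k)$ attaches the $\rho$-part at the \emph{first} open node of $T_k$, one gets $\sub(\sigma(T_1,\dots,T_k))=k$ if $T_k$ is trivial, $\sub(T_k)+k$ if the root of $T_k$ is open, and $\sub(T_k)$ otherwise; so the recursion does not close in terms of the pair $(\rmod,\sub)$ alone, and the restricted series $\sum_{\open(S)=1}y^{\sub(S)}t^{|S|}$ that your $\nu_i$-term requires is precisely this problematic $\sigma$-type series. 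Your proposal says one ``likely has to carry an auxiliary marker \dots and check that the enlarged system still closes,'' but no such marker is chosen, no enlarged system is written down, and no argument is given that it closes or that its solution is symmetric in the required sense; the same applies to the alternative routes you mention (a new bijection, or transporting a power of $\phi$ through $\psi$), which are named but not constructed. So what you have is a sensible reduction of the conjecture to a single identifiable difficulty --- which is of some value, and consistent with why the authors left it open --- but it is a research plan, not a proof, and Conjecture~\ref{conj2} remains unproved by it.
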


We have verified Conjecture~\ref{conj2} for {\btrees} on at most 11
nodes. This conjecture will imply, via the bijection described in
Section~\ref{sec:maps-trees}, that the two pairs of statistics
$(\rootM, \rzeroM)$ and $(\rmodM, \subM)$ are jointly equidistributed
on bicubic maps.

\section{Acknowledgments}
We wish to thank the anonymous referee (of an earlier version of this
paper) who came up with the generating function based proof of Theorem
\ref{thm}.

The third author was supported by the Spanish and Catalan
  governments under projects MTM2011--24097 and DGR2009--SGR1040.

\section{Appendix}

Below are some examples of the map $\psi$ from bicubic maps to
\btrees. The image of each large map at the top is the tree below it, and
for each smaller map, its image is the subtree consisting in the edge
next to it and all the edges below, with the root label adjusted if
necessary.

Also, the two trees are the image of each other under the involution
$g$. For the first tree and map, $\exc(T) = \one(M) = 6$, $\root(T) =
\rootM(M)=4$, $\rmod(T) = \rmodM(M) =2$, $\rzero(T) = \rzeroM(M) =1$,
and $\sub(T) = \subM(M)=4$. For the second tree and map,
$\exc(T) = \one(M) = 6$, $\root(T) = \rootM(M)=4$, $\rmod(T) =
\rmodM(M) =2$, $\rzero(T) = \rzeroM(M) = 3$, and $\sub(T) =
\subM(M)=4$.
\begin{figure}
  $$\includegraphics[width=12cm]{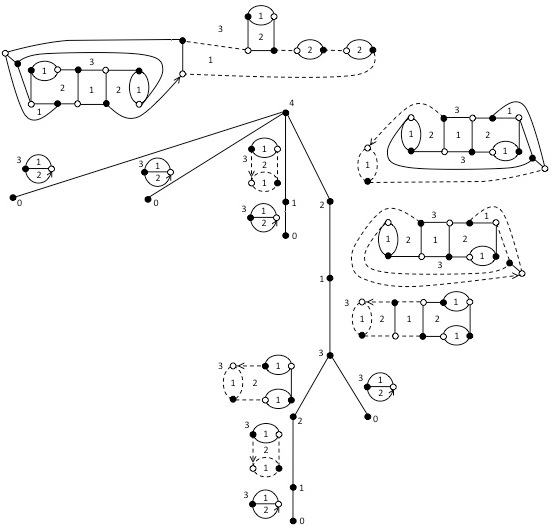}
  $$
\end{figure}

\begin{figure}
  $$\includegraphics[width=12cm]{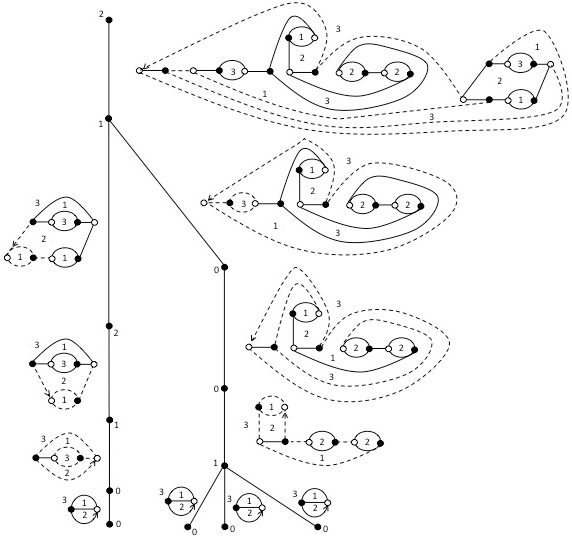}
  $$
\end{figure}

\end{document}